\newtheorem{theorem}{Theorem}[section]
\newtheorem{lemma}[theorem]{Lemma}
\newtheorem{proposition}[theorem]{Proposition}
\newtheorem{corollary}[theorem]{Corollary}
\theoremstyle{definition}
\newtheorem{definition}[theorem]{Definition}
\newtheorem{example}[theorem]{Example}
\newtheorem{remark}[theorem]{Remark}
\def\LL{\mathscr{L}}
\def\NN{\mathbb{N}}
\def\ZZ{\mathbb{Z}}
\def\ee{\mathrm{e}}
\def\RR{\mathbb{R}}
\def\CC{\mathbb{C}}
\def\Cb{\mathrm{C}_{\mathrm{b}}}
\def\Ell{\mathrm{L}}
\def\co{\mathrm{c}_0}
\def\BUC{\mathrm{BUC}}
\def\ran{\mathop{\mathrm{ran}}}
\def\clconv{\mathop{\overline{\mathrm{conv}}}}
\def\ap{\mathrm{ap}}
\def\aap{\mathrm{aap}}
\def\LLL{\mathscr{L}}
\def\veps{\varepsilon}
\def\calS{\mathscr{S}}
\def\Id{\mathrm{I}}
\begin{document}
\title{A note on the periodic decomposition problem for semigroups}
\author{B\'alint Farkas}

\address{
  University of Wuppertal, Faculty of Mathematics and Natural Sciences, Gau\ss
  strasse 20, 42119, Wuppertal, Germany}
\email{farkas@math.uni-wuppertal.de}
\begin{abstract}

Given $T_1,\dots, T_n$ commuting power-bounded operators on a Banach space we study under which conditions the equality $\ker (T_1-\Id)\cdots (T_n-\Id)=\ker(T_1-\Id)+\cdots +\ker (T_n-\Id)$ holds true. This problem, known as the periodic decomposition problem, goes back to I.{} Z.{} Ruzsa. In this short note we consider the  case when $T_j=T(t_j)$, $t_j>0$, $j=1,\dots, n$ for some one-parameter semigroup $(T(t))_{t\geq 0}$. We also look at a generalization of the periodic decomposition problem when instead of the cyclic semigroups $\{T_j^n:n \in \NN\}$ more general semigroups of bounded linear operators are considered.
\end{abstract}

\thanks{The author was supported by the Hungarian Research Fund (OTKA-100461).}
\keywords{periodic decomposition problem, $C_0$-(semi)group, norm-continuous one-parameter semigroup, amenable semigroup}
\subjclass[2010]{47D06, 47B39, 47D99, 46B20}

\maketitle

The original decomposition problem as formulated by I.{} Z.{} Ruzsa concerns decompositions of functions $f:\RR\to \RR$  into finite sum of  periodic ones with prescribed periods.  
Let $a_1,a_2,\dots, a_n\in \RR$ be fixed real numbers. Given a function $f:\RR\to\RR$ decide whether it is possible to decompose it as a sum of periodic functions 
\begin{align}\label{eq:perdec}
f=f_1+\cdots+f_n \quad \text {with $f_j$ being $a_j$-periodic for $j=1,\dots,n$}.
\end{align}
 Denoting by $S_a:\RR^\RR\to \RR^\RR$ the shift operator $S_af(x)=f(x+a)$ a necessary condition for  the existence of such periodic decompositions  is easily formulated:
\begin{align}\label{eq:diff}
(S_{a_1}-\Id)\cdots(S_{a_n}-\Id)f=0
\end{align}
must hold, where $\Id$ denotes the identity operator, since the shift operators commute and annihilate the respective periodic functions. 
The \emph{periodic decomposition problem} asks for the converse direction: Must a function $f\in\RR^\RR$ subject to \eqref{eq:diff} be necessarily the sum of $a_j$-periodic functions as in \eqref{eq:perdec}? Or if not, then what difference equations (if any) characterize the existence of such periodic decompositions? This question was answered satisfactorily in \cite{FHKR} for the class of functions $\RR^\RR$, where a complete characterization of the existence  periodic decompositions is given in terms of difference equations of type \eqref{eq:diff}. It turned out that for  $a_1,\dots, a_n$  pairwise incommensurable condition \eqref{eq:diff} is  also sufficient for the existence of periodic decompositions, see also \cite{invari}. Moreover, in \cite{FHKR} not only translations on $\RR$ but also actions of mappings on an arbitrary set $X$ were considered.

\medskip\noindent
Of course, the same question is interesting  for translation invariant linear subspaces $E$ of $\RR^\RR$. In this setting one is interested in whether the equality
\begin{equation}\label{eq:translkern}
 \ker(S_{a_1}-\Id)\cdots (S_{a_n}-\Id)=\ker(S_{a_1}-\Id)+\cdots+\ker (S_{a_n}-\Id)
\end{equation}
holds true.  Here the inclusion ``$\supseteq$'' corresponds to aforementioned implication ``(1)$\Rightarrow$(2)''. The first result in this direction is due to Laczkovich and R\'ev\'esz, \cite{laczkovich/revesz:1989}, who showed that for $E=\Cb(\RR)$, the space of bounded and continuous functions, in \eqref{eq:translkern} indeed equality holds. The above reformulation opened up the toolbox of operator theory, and it was soon discovered that in many translation invariant subspaces of $\RR^\RR$, most notably in $E=\Ell^p(\RR)$, $p\in[1,\infty]$, the periodic decomposition problem can be answered affirmatively, see R\'ev\'esz, Laczkovich \cite{laczkovich/revesz:1990}. Leaving behind the original setting they considered the following purely operator theoretic problem: Given $E$ a Banach space and  $T_1,\dots, T_n\in \LL(E)$ bounded linear operators under which circumstances does the equality
\begin{equation}\label{eq:kereq}
\ker(T_{1}-\Id)\cdots (T_{n}-\Id)=\ker(T_{1}-\Id)+\cdots+\ker (T_{n}-\Id)
\end{equation}
hold true? Among others they have proved that this is so for power-bounded operators on reflexive spaces. Note that in this formulation it becomes apparent that whether the occurring functions are real or complex valued should be immaterial. This is not immediately clear if one looks at the original proof of Laczkovich and R\'ev\'esz for the decomposition property of $\Cb(\RR)$ with respect to translations. Gajda \cite{gajda:1992} proved the analogous result for the space $\BUC(\RR)$ of bounded and continuous functions. Some years later Kadets and Shumyatskiy \cite{KSadd, KSave} took up the topic again and provided some additional insights how various functional analytic/operator theoretic properties are related to the subject matter. 

\medskip\noindent  Instead of  the actions of $\NN$ or $\ZZ$, e.g., as translations $x\to x+na_j$  on $\RR$ one can more generally consider  the action of some semigroups $S_j$, $j=1\dots,n$ on some set $X$, and ask for the existence of decompositions of a function $f:X\to \CC$ as a sum
\begin{equation*}
f=f_1+\cdots+f_n
\end{equation*}
where $f_j$ are invariant under the action of $S_j$. This note intends to provide some first results in this general direction, see Section \ref{sec:sgrp} where we extend some results of Gajda \cite{Gajda}. While in Section \ref{sec:c0} we study the validity of \eqref{eq:kereq} when $T_j=T(t_j)$ for some one-parameter $C_0$-(semi)group $(T(t))_{t\geq 0}$, a first such result is due to Kadets and Shumyatskiy for $n=2$ and $(T(t))_{t\in \RR}$ a bounded group, see \cite{KSadd}.

\section{Commuting actions of semigroups}\label{sec:sgrp}
Let $S$ be an arbitrary (discrete) semigroup with unit element $1$, and let $S_j$, $j=1,\dots,n$ be commuting unital subsemigroups of $S$ that all act on the nonempty set $X$, the unit acting as the identity. For definiteness we write the action to the right, i.e., $st$ acts as $x\mapsto (xs)t$. For $s\in S$ the \emph{Koopman operator} $S_s:\CC^X\to\CC^X$ is defined by
\begin{equation*}
S_sf(x)=f(xs).
\end{equation*}

Generalizing \eqref{eq:kereq} for a bounded function $f:X\to\CC$ we ask whether the validity of the difference equation
\begin{align}\label{eq:Sdiff}
(S_{s_1}-\Id)\cdots (S_{s_n}-\Id)f=0
\quad\mbox{for all $s_j\in S_j$, $j=1,\dots,n$}
\end{align}
is equivalent to the existence of a decomposition
\begin{align}\label{eq:Sdecomp}
f=f_1+\cdots +f_n,
\end{align}
where for $j=1,\dots,n$ the function $f_j:X\to \CC$ is $S_j$-invariant, meaning that $S_{s_j}f=f$ for all  $s_j\in S_j$. Such a decomposition we shall call $({S}_1,{S}_2,\dots, {S}_n)$-invariant, whose existence implies \eqref{eq:Sdiff} without any further assumptions:  The operators $S_{s_j}-\Id$, $s_j\in S_j$ , $j=1,\dots,n$ commute with each other and an ${S}_j$-invariant function $f_j$ appearing in \eqref{eq:Sdecomp} is annihilated by $S_{s_j}-\Id$. If the semigroups $S_j$ are (left-)amenable we can prove the converse implication.

\begin{theorem}\label{thm:Sdecomp}
Suppose that for $j=1,\dots,n$ the unital semigroups $S_j$ acting on the set $X$ are left-amenable and the actions of $S_j$s are commuting. Then for every bounded function $f:X\to\CC$ the validity of the difference equation \eqref{eq:Sdiff} and the existence of an $(S_1,S_2,\dots,S_n)$-invariant decomposition \eqref{eq:Sdecomp} are equivalent.
\end{theorem}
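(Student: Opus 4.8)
The plan is to induct on $n$, with the case $n=1$ being trivial (the difference equation says $f$ is already $S_1$-invariant). The engine of the whole argument is the existence, for each left-amenable semigroup $S_j$, of a left-invariant mean $m_j$ on $\ell^\infty(S_j)$. Given a bounded $f:X\to\CC$ and a point $x\in X$, the function $s_j\mapsto S_{s_j}f(x) = f(xs_j)$ is bounded on $S_j$, so I can define
\begin{equation*}
P_jf(x) = m_j\bigl(s_j\mapsto f(xs_j)\bigr).
\end{equation*}
First I would record the basic properties of $P_j$: it is a linear contraction on the bounded functions $X\to\CC$; by left-invariance of $m_j$ it satisfies $S_{t_j}P_jf = P_jf$ for all $t_j\in S_j$, so $\ran P_j$ consists of $S_j$-invariant functions; and if $f$ is already $S_j$-invariant then $P_jf=f$, so $P_j$ is a projection onto the $S_j$-invariant functions. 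The one genuinely substantive point here is that, because the actions of the $S_i$ commute, the operators $P_1,\dots,P_n$ commute with each other and each $P_j$ commutes with every Koopman operator $S_{s_i}$; this follows by applying the mean $m_j$ in the variable $s_j$ to the identity $f(x s_i s_j) = f(x s_j s_i)$, using that a mean is a positive linear functional and hence respects the pointwise manipulations. I should be slightly careful that $m_j$ need not be multiplicative, so I must phrase everything in terms of linearity and invariance of the mean, never in terms of evaluating at points of $S_j$.

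With these projections in hand the induction runs as follows. Assume the result for $n-1$ and suppose $f$ satisfies \eqref{eq:Sdiff}. Apply $P_n$ to the equation: since $P_n$ commutes with each $S_{s_j}-\Id$ for $j<n$ and annihilates $S_{s_n}-\Id$ in the sense that $P_n(S_{s_n}-\Id)g = (S_{s_n}-\Id)P_ng$ while also $P_ng$ is $S_n$-invariant — more directly, $(S_{s_n}-\Id)$ applied after all the other factors leaves something in $\ker P_n$ — I get that $g := (S_{s_1}-\Id)\cdots(S_{s_{n-1}}-\Id)f$ lies in $\ker P_n$, equivalently $P_ng = 0$, for all choices of $s_1,\dots,s_{n-1}$. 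Now set $h = P_nf$; this is $S_n$-invariant and bounded, and $f - h = (\Id - P_n)f$ satisfies $(S_{s_1}-\Id)\cdots(S_{s_{n-1}}-\Id)(f-h) = (\Id-P_n)g$. But $g\in\ker P_n$ already gives $(\Id - P_n)g = g$... so that is not quite zero. The correct move is instead: apply all of $(S_{s_1}-\Id)\cdots(S_{s_{n-1}}-\Id)$ to $h=P_nf$ and use that this equals $P_n\bigl[(S_{s_1}-\Id)\cdots(S_{s_{n-1}}-\Id)f\bigr]$, then hit it once more with $(S_{s_n}-\Id)$; by \eqref{eq:Sdiff} the full product is zero, and since $h$ is $S_n$-invariant the last factor does nothing, leaving $(S_{s_1}-\Id)\cdots(S_{s_{n-1}}-\Id)h = 0$ after the $P_n$ and the $(S_{s_n}-\Id)$ are accounted for. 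Thus $f - h$ satisfies the order $n-1$ difference equation with the semigroups $S_1,\dots,S_{n-1}$, so by the inductive hypothesis $f - h = f_1 + \cdots + f_{n-1}$ with $f_j$ being $S_j$-invariant, and then $f = f_1 + \cdots + f_{n-1} + h$ with $f_n := h$ is the desired $(S_1,\dots,S_n)$-invariant decomposition.

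The main obstacle, and the step I would write out most carefully, is the bookkeeping in the previous paragraph: disentangling which operator kills which factor and verifying that $(S_{s_1}-\Id)\cdots(S_{s_{n-1}}-\Id)(f - P_nf)$ really does vanish for all $s_1,\dots,s_{n-1}$. The clean way is to prove a small lemma first: if $g$ is bounded and $(S_{s_n}-\Id)g = 0$ identically makes $g$ $S_n$-invariant, while conversely the operator $\Id - P_n$ maps the solution space of \eqref{eq:Sdiff} into the solution space of the reduced equation. Concretely, one checks $(\Id-P_n)$ commutes with each $S_{s_j}-\Id$ ($j<n$) and that, for any bounded $u$ with $(S_{s_n}-\Id)u$ in the range of some fixed composition of commuting operators equal to $0$, one has $(\Id - P_n)u \in \ker(S_{s_1}-\Id)\cdots(S_{s_{n-1}}-\Id)$; this is where the order-$n$ hypothesis on $f$ feeds in through $u = (S_{s_1}-\Id)\cdots(S_{s_{n-1}}-\Id)f$, whose image under $S_{s_n}-\Id$ is zero, hence $P_nu = u$, hence $(\Id-P_n)u = 0$. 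That last chain is the crux, and everything else is the routine verification of linearity, contractivity, and commutation recorded in the first paragraph.
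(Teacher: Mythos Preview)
Your argument is correct and takes a genuinely different route from the paper's. The paper does not induct: it expands the difference equation as an alternating sum over $\epsilon\in 2^n$, applies all $n$ invariant means $\mu_1,\dots,\mu_n$ at once, and then groups the resulting terms into explicit formulas
\[
f_j(x)=-\sum_{\substack{\epsilon\in 2^n,\ \epsilon_j\neq 0\\ \epsilon_k=0\text{ for }k>j}} (-1)^{|\epsilon|}\,\mu_1\bigl(\mu_2(\cdots\mu_j(f(xs_1^{\epsilon_1}\cdots s_j))\cdots)\bigr),
\]
checking directly from left-invariance of $\mu_j$ that each $f_j$ is $S_j$-invariant. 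Your inductive approach via the single averaging projection $P_n$ is structurally cleaner and makes the mechanism transparent: what you are really proving is that $\Id-P_n$ maps the solution space of the order-$n$ equation into that of the order-$(n{-}1)$ equation, which is exactly the reduction you isolate in your last paragraph (the chain $u:=(S_{s_1}-\Id)\cdots(S_{s_{n-1}}-\Id)f$ is $S_n$-invariant, hence $P_nu=u$, hence $(\Id-P_n)u=0$). The paper's approach, in exchange, hands you closed-form expressions for all the summands simultaneously.

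One caution: your aside that ``the operators $P_1,\dots,P_n$ commute with each other'' is not true in general---iterated means need not satisfy a Fubini-type identity---and $P_j$ need not commute with $S_{s_j}$ either, since $m_j$ is only assumed left-invariant. Fortunately your induction only uses that $P_n$ commutes with $S_{s_i}$ for $i<n$, which does follow from the commutation of the actions, so the proof stands once you drop the overreaching claims.
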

\begin{proof}
We only need to prove  that \eqref{eq:Sdiff} implies \eqref{eq:Sdecomp}.
For $j=1,\dots,n$ let $\mu_j$ be a left-invariant mean on $\ell^\infty(S_j)$.
Let $x\in X$ be fixed. The difference equation
\eqref{eq:Sdiff} implies
\begin{equation}\label{eq:Sumeq}
0=\sum_{\epsilon\in 2^n} (-1)^{|\epsilon|}
f(xs_1^{\epsilon_1}s_2^{\epsilon_2}\cdots s_n^{\epsilon_n})
\end{equation}
for $s_i\in S_i$ and $i=1,\dots,n$. Note that by assumption (the actions of the $S_j$s  commute) the
order of the products is here immaterial. 
Let 
\begin{equation*}
h_{x,\epsilon}(s_1,s_2,\dots, s_n):=f(xs_1^{\epsilon_1}s_2^{\epsilon_2}\cdots s_n^{\epsilon_n}),
\end{equation*}
then, by fixing variables, each of the partial functions $s_j\mapsto h_{x,\epsilon}(s_1,s_2,\dots, s_n)$ belongs to $\ell^\infty(S_j)$ and so do the functions $s_j\mapsto \mu_i(s_i\mapsto h_{x,\epsilon}(s_1,s_2,\dots, s_n))$ for all $i,j=1,\dots,n$. Therefore, by applying the $\mu_j$s consecutively to \eqref{eq:Sumeq} we obtain the equality
\begin{equation}\label{eq:0means}
0=\sum_{\epsilon\in 2^n} (-1)^{|\epsilon|}
\mu_1(\mu_2(\cdots \mu_n(f(xs_1^{\epsilon_1}s_2^{\epsilon_2}\cdots
s_n^{\epsilon_n})\cdots))),
\end{equation}
where the action of $\mu_j$ is understood in the variable $s_j\in S_j$. Let us define the functions
\begin{align*}
f_j(x)&:=-\sum_{{\epsilon\in 2^n\atop \epsilon_j\neq 0}\atop
\epsilon_k=0\text{ for }k>j} (-1)^{|\epsilon|} \mu_1(\mu_2(\cdots
\mu_n(f(xs_1^{\epsilon_1}s_2^{\epsilon_2}\cdots
s_n^{\epsilon_n})\cdots)))\\
&=-\sum_{{\epsilon\in 2^n\atop \epsilon_j\neq 0}\atop
\epsilon_k=0\text{ for }k>j} (-1)^{|\epsilon|} \mu_1(\mu_2(\cdots
\mu_j(f(xs_1^{\epsilon_1}s_2^{\epsilon_2}\cdots s_j)\cdots))).
\end{align*}
In this way, from \eqref{eq:0means} we obtain
\begin{equation*}
f(x)=f_1(x)+f_2(x)+\cdots+f_n(x).
\end{equation*}
We prove that the function $f_j$ is $S_j$-invariant. For $s\in S_j$ and $i\neq j$ we can interchange $s_i$ with $s$ and obtain 
\begin{align*}
f_j(xs)&=\sum_{{\epsilon\in 2^n\atop \epsilon_j\neq 0}\atop
\epsilon_k=0\text{ for }k>j} (-1)^{|\epsilon|} \mu_1(\mu_2(\cdots
\mu_j(f(xss_1^{\epsilon_1}s_2^{\epsilon_2}\cdots
s_j)\cdots)))\\
&=\sum_{{\epsilon\in 2^n\atop \epsilon_j\neq 0}\atop
\epsilon_k=0\text{ for }k>j} (-1)^{|\epsilon|} \mu_1(\mu_2(\cdots
\mu_j(f(xs_1^{\epsilon_1}s_2^{\epsilon_2}\cdots
ss_j)\cdots)))\\
 &=\sum_{{\epsilon\in 2^n\atop \epsilon_j\neq 0}\atop
\epsilon_k=0\text{ for }k>j} (-1)^{|\epsilon|} \mu_1(\mu_2(\cdots
\mu_j(f(xs_1^{\epsilon_1}s_2^{\epsilon_2}\cdots
s_j)\cdots)))=f_j(x),
\end{align*}
where we used that $\mu_j$ is left-invariant.
\end{proof}

The result above and its proof is a generalization of one of Gajda. In \cite{Gajda} he applied Banach limits to prove the equivalence of \eqref{eq:Sdiff} and \eqref{eq:Sdecomp} in the original setting of the periodic decomposition problem, i.e., when $S_j=\ZZ$, $j=1,\dots,n$.

\begin{theorem}
Let $(X,d)$ be a metric space and suppose that the action of $S_j$ on $X$ is uniformly equicontinuous, i.e., for each $s_j\in S_j$, $j=1,\dots,n$ the mapping $x\mapsto xs_j$ is uniformly equicontinuous in $s_j\in S_j$. Let $f:X\to\CC$ be a bounded, uniformly continuous function. Then the difference equation \eqref{eq:Sdiff} and the existence of a decomposition as in \eqref{eq:Sdecomp} with $f_j$ uniformly continuous are equivalent.
\end{theorem}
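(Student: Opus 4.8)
The plan is to reuse the construction from the proof of Theorem \ref{thm:Sdecomp} essentially verbatim, and then verify the one extra assertion, namely that the functions $f_j$ produced there are uniformly continuous whenever $f$ is. Since the existence of a decomposition \eqref{eq:Sdecomp} with uniformly continuous $f_j$ trivially implies \eqref{eq:Sdiff} (as before, and the continuity hypothesis is not even needed for that direction), the only thing to do is the forward implication. Observe first that uniform equicontinuity of the action is preserved under composition: if $x\mapsto xs_i$ and $x\mapsto xs_j$ are each uniformly equicontinuous in the respective parameter, then so is $x\mapsto xs_i s_j$, uniformly in $(s_i,s_j)$; iterating, the maps $x\mapsto xs_1^{\epsilon_1}\cdots s_j^{\epsilon_j}$ are uniformly equicontinuous in $(s_1,\dots,s_j)\in S_1\times\cdots\times S_j$. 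Hence, since $f$ is uniformly continuous, the composite functions $x\mapsto f(xs_1^{\epsilon_1}\cdots s_j^{\epsilon_j})$ are uniformly continuous in $x$, with a modulus of continuity that does not depend on the parameters $s_1,\dots,s_j$.

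The next step is to check that applying the invariant means $\mu_1,\dots,\mu_j$ does not destroy uniform continuity in $x$. This is where the uniformity in the previous paragraph is used: fix $\veps>0$ and choose $\delta>0$ so that $d(x,y)<\delta$ implies $\bigl|f(xs_1^{\epsilon_1}\cdots s_j^{\epsilon_j})-f(ys_1^{\epsilon_1}\cdots s_j^{\epsilon_j})\bigr|<\veps$ for \emph{all} $s_1\in S_1,\dots,s_j\in S_j$ simultaneously. Then for $d(x,y)<\delta$ the two bounded functions on $S_j$ given by $s_j\mapsto f(xs_1^{\epsilon_1}\cdots s_j)$ and $s_j\mapsto f(ys_1^{\epsilon_1}\cdots s_j)$ differ in sup-norm by at most $\veps$, so applying $\mu_j$ (a positive, norm-one functional) gives two functions of $(s_1,\dots,s_{j-1})$ still within $\veps$ in sup-norm, and repeating for $\mu_{j-1},\dots,\mu_1$ preserves this bound. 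Therefore each summand in the definition of $f_j$ is uniformly continuous with the same modulus, and since $f_j$ is a finite signed sum of such summands, $f_j$ is uniformly continuous. Boundedness of $f_j$ is already clear from the means being norm-one and $f$ being bounded.

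Finally, the $S_j$-invariance of each $f_j$ is exactly the computation carried out at the end of the proof of Theorem \ref{thm:Sdecomp} — it used only the commutativity of the actions and the left-invariance of $\mu_j$, neither of which is affected by the added topological hypotheses — so it carries over unchanged, and $f=f_1+\cdots+f_n$ is the desired decomposition. I do not anticipate a genuine obstacle here: the whole point is that the hypothesis of \emph{uniform} equicontinuity (as opposed to mere equicontinuity of each individual map) is precisely what is needed to get a parameter-independent modulus of continuity, which then survives the averaging. The one place to be slightly careful is to state explicitly that a mean on $\ell^\infty(S_j)$, being positive and unital, has operator norm $1$ and hence is $1$-Lipschitz on $\ell^\infty(S_j)$; this is the elementary fact that makes the sup-norm estimate pass through the iterated application of $\mu_1,\dots,\mu_j$.
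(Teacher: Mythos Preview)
Your proposal is correct and follows essentially the same route as the paper: reuse the explicit construction of the $f_j$ from Theorem~\ref{thm:Sdecomp}, then check that uniform equicontinuity of the actions together with uniform continuity of $f$ gives a parameter-independent modulus for $x\mapsto f(xs_1^{\epsilon_1}\cdots s_j)$, which survives the iterated application of the norm-one means $\mu_1,\dots,\mu_j$. The paper organizes the $\delta$-bookkeeping as a descending chain $\delta_j,\delta_{j-1},\dots,\delta_0$ and sums over the at most $2^n$ terms, whereas you phrase it via composition of uniformly equicontinuous maps and $1$-Lipschitzness of means; these are the same argument.
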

\begin{proof}
We only need to prove that under the validity of \eqref{eq:Sdiff} the decomposition given in  the proof of Theorem \ref{thm:Sdecomp} yields uniformly continuous functions $f_j$. Let $\veps>0$ be given, and set $\delta_0:=\veps/2^n$. Let $\delta_i>0$, $i=1,2,\dots, j$ be chosen such that for $d(x,y)<\delta_{i}$ implies $d(xs_i,ys_i)<\delta_{i-1}$ for all $s_i\in S_i$ and $i=1,2,\dots,j$ (use uniform equicontinuity).
Set $\delta:=\delta_j$. Then for $d(x,y)<\delta$ one has
\begin{align*}
&|f_j(x)-f_j(y)|\\
&\leq\quad\sum_{{\epsilon\in 2^n\atop \epsilon_j\neq 0}\atop
\epsilon_k=0\text{ for }k>j} \Bigl|\mu_1\bigl(\mu_2\bigl(\cdots
\mu_n(f(xs_1^{\epsilon_1}s_2^{\epsilon_2}\cdots
s_n^{\epsilon_n})-f(ys_1^{\epsilon_1}s_2^{\epsilon_2}\cdots
s_n^{\epsilon_n}))\cdots\bigr)\bigr)\Bigr|\\
&\quad\leq2^n \delta_0\leq\veps,
\end{align*}
whenever $d(x,y)<\delta$.
\end{proof}
The same argumentation applies when $X$ is a uniform space.

\medskip\noindent We now turn to a pure operator theoretic formulation concerning  subsemigroups $\mathscr{S}\subseteq \LL(E)$ of the semigroup $\LL(E)$ of bounded linear operators on a Banach space $E$. Recall that a semigroup $\mathscr{S}\subseteq \LL(E)$ is called \emph{mean-ergodic} if the closed convex hull $\clconv{\calS}\subseteq \LL(E)$ contains a zero element $P$, i.e., $PT=P=TP$ for every $T\in \mathscr{S}$. In this case $P$ is a projection, called the mean ergodic projection of $\mathscr{S}$, and one has 
\begin{equation*}\ran P=\bigcap_{S\in \mathscr{S}} \ker(S-\Id)
\end{equation*} and $E=\ran P\oplus \ran(\Id-P)$,  see \cite{Nagel73}. For simplicity we introduce the notation $\ker\mathscr{A}=\bigcap_{S\in \mathscr{A}} \ker S$ for subsets $\mathscr{A}\subseteq \LL(E)$.
\begin{theorem}
Let $\mathscr{S}_1,\mathscr{S}_2,\dots,\mathscr{S}_n\subset \LL(E)$
be mean ergodic operator {semigroups} and suppose that $ST=TS$
whenever $T\in \mathscr{S}_i$, $S\in \mathscr{S}_j$ with $i\neq j$. Then
\begin{equation*} \ker (\mathscr{S}_1-\Id)\cdots (\mathscr{S}_n-\Id)=\ker
(\mathscr{S}_1-\Id)+\cdots +\ker(\mathscr{S}_n-\Id).\end{equation*}
\end{theorem}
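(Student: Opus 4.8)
The plan is to mimic the proof of Theorem~\ref{thm:Sdecomp}, replacing the invariant means $\mu_j$ on $\ell^\infty(S_j)$ by the mean ergodic projections $P_j$ associated with $\mathscr{S}_j$. Since $P_j\in\clconv{\mathscr{S}_j}$ and operators from different $\mathscr{S}_i$ commute, one first checks that $P_j$ commutes with every operator in $\mathscr{S}_i$ for $i\neq j$, hence the $P_j$ commute pairwise. The key algebraic identity is the following: fix $x\in\ker(\mathscr{S}_1-\Id)\cdots(\mathscr{S}_n-\Id)$. For any choice $S_i\in\mathscr{S}_i$ one has the binomial-type expansion
\begin{equation*}
0=\prod_{i=1}^n(S_i-\Id)x=\sum_{\epsilon\in 2^n}(-1)^{n-|\epsilon|}S_1^{\epsilon_1}\cdots S_n^{\epsilon_n}x,
\end{equation*}
and applying $P_1,\dots,P_n$ suitably (using $P_iS_i=P_i$ and $P_i$ fixing the range of the other projections) collapses this into an identity expressing $x$ — or rather $(\Id-P_n)\cdots(\Id-P_1)x$ together with correction terms — as a sum of vectors lying in the $\ker(\mathscr{S}_j-\Id)$.

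More concretely, I would argue by induction on $n$. For $n=1$ the statement is the tautology $\ker(\mathscr{S}_1-\Id)=\ker(\mathscr{S}_1-\Id)$. For the inductive step, write $\mathscr{A}=\ker(\mathscr{S}_1-\Id)\cdots(\mathscr{S}_{n-1}-\Id)$ as an operator range/kernel and observe that $(\mathscr{S}_n-\Id)x$ annihilates the product of the first $n-1$ factors, i.e.\ $(S_n-\Id)x\in\ker(\mathscr{S}_1-\Id)\cdots(\mathscr{S}_{n-1}-\Id)$ for every $S_n\in\mathscr{S}_n$. The mean ergodic decomposition $E=\ran P_n\oplus\ran(\Id-P_n)$ with $\ran P_n=\ker(\mathscr{S}_n-\Id)$ lets me write $x=P_nx+(\Id-P_n)x$; the summand $P_nx\in\ker(\mathscr{S}_n-\Id)$ is harmless, so it remains to handle $y:=(\Id-P_n)x$. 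The point is that $y$ still lies in the big kernel (because $P_n$ commutes with all the other $\mathscr{S}_i$ and with their projections), and moreover $P_n y=0$; one then shows $y$ actually lies in $\ker(\mathscr{S}_1-\Id)\cdots(\mathscr{S}_{n-1}-\Id)$, so the induction hypothesis applied to the commuting mean ergodic semigroups $\mathscr{S}_1,\dots,\mathscr{S}_{n-1}$ (restricted if necessary to the invariant closed subspace $\ran(\Id-P_n)$, on which they are still mean ergodic since $\ran(\Id-P_n)$ is $P_i$-invariant) finishes the job.

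To see that $y=(\Id-P_n)x$ lies in $\ker(\mathscr{S}_1-\Id)\cdots(\mathscr{S}_{n-1}-\Id)$: pick $S_i\in\mathscr{S}_i$, $i=1,\dots,n-1$, and note that $(S_n-\Id)x\in\ker(S_1-\Id)\cdots(S_{n-1}-\Id)$ for all $S_n$. Since $P_n$ is a limit of convex combinations of such $S_n$, and $(S_1-\Id)\cdots(S_{n-1}-\Id)$ is a bounded operator commuting with each $S_n$, we get $(P_n-\Id)x\in\ker(S_1-\Id)\cdots(S_{n-1}-\Id)$; as this holds for all choices of the $S_i$, indeed $y\in\ker(\mathscr{S}_1-\Id)\cdots(\mathscr{S}_{n-1}-\Id)$. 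The reverse inclusion ``$\supseteq$'' of the theorem is immediate since the factors commute and each $\ker(\mathscr{S}_j-\Id)$ is annihilated by the $j$-th factor.

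The main obstacle I anticipate is bookkeeping the commutation and invariance: one must be careful that $\ran(\Id-P_n)$ is invariant under each $\mathscr{S}_i$ and under each $P_i$ for $i<n$, and that the restrictions $\mathscr{S}_i|_{\ran(\Id-P_n)}$ remain mean ergodic with mean ergodic projection $P_i|_{\ran(\Id-P_n)}$ — this follows from $P_iP_n=P_nP_i$ and the characterization of mean ergodicity via the zero element of $\clconv{\mathscr{S}_i}$, but it needs to be stated cleanly. Everything else is the same $2^n$-term inclusion–exclusion bookkeeping as in Theorem~\ref{thm:Sdecomp}, now carried out with projections instead of invariant means.
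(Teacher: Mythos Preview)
Your proposal is correct and follows essentially the same route as the paper: pass from the semigroups $\mathscr{S}_j$ to their mean ergodic projections $P_j\in\clconv\mathscr{S}_j$, use commutation to see that the $P_j$ commute and that the summands of $E=\ran P_j\oplus\ran(\Id-P_j)$ are invariant under the other semigroups, check that restrictions to such invariant closed subspaces remain mean ergodic, and then run an induction on $n$ via $x=P_nx+(\Id-P_n)x$. Your key step---that $(\Id-P_n)x\in\ker(\mathscr{S}_1-\Id)\cdots(\mathscr{S}_{n-1}-\Id)$ because $P_n$ is a strong limit of convex combinations of elements of $\mathscr{S}_n$---is exactly the paper's observation that $x\in\ker(\mathscr{S}_1-\Id)\cdots(\mathscr{S}_n-\Id)$ implies $x\in\ker(P_1-\Id)\cdots(P_n-\Id)$, just phrased one factor at a time. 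Two minor remarks: the $2^n$-term inclusion--exclusion you sketch at the start is not actually needed once you switch to the inductive argument, and the restriction to $\ran(\Id-P_n)$ in the inductive step is harmless but unnecessary, since $\mathscr{S}_1,\dots,\mathscr{S}_{n-1}$ are already mean ergodic on all of $E$ and you can apply the induction hypothesis there directly.
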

\begin{proof}
Let $P_i$ be the mean ergodic projection onto $\ker (\mathscr{S}_i-\Id)$, recall
$P_i\in \clconv (\mathscr{S}_i)$, where the closure is understood in
the strong operator topology. As for any projection, we have
\begin{equation*}
E=\ran P_i\oplus \ker P_i=\ran P_i\oplus \ran (P_i-\Id).
\end{equation*}
Here $\ran P_i=\ker (\mathscr{S}_i-\Id)$. Since $\mathscr{S}_i$ and
$\mathscr{S}_j$ commute the subspaces in the above decomposition are
$\mathscr{S}_j$ invariant. So the proof
will be complete if we can show that for any
$\mathscr{S}_i$-invariant subspace $Y$ also the restricted semigroup
$\mathscr{S}_i|_Y$ is mean ergodic and if $x\in \ker
(\mathscr{S}_1-\Id)\cdots (\mathscr{S}_n-\Id)$ implies
$x\in \ker (P_1-\Id)\cdots (I-P_n)$. The latter assertion is
indeed true: If $x\in \ker (\mathscr{S}_1-\Id)\cdots
(\mathscr{S}_n-\Id)$, then  $x\in \ker
(\clconv(\mathscr{S}_1)-\Id)\cdots
(\clconv(\mathscr{S}_n)-\Id)$ by continuity. From this $x\in \ker
(P_1-\Id)\cdots (P_n-\Id)$ follows. Mean ergodicity of
$\mathscr{S}_j|_Y$ follows, since $Q_j=P_j|_Y\in
\clconv{\mathscr{S}_j|_Y}$ by the closedness of $Y$, and then $Q_j$ is the
mean ergodic projection of $\mathscr{S}_j|_Y$.
\end{proof}
One obtains the following corollary that was already present in \cite{laczkovich/revesz:1990}, but remained unnoticed behind other results. Note also that in \cite{KSave}  Kadets and Shumyatskiy prove also the following in  a slightly more general form.

\begin{corollary}\label{cor:mearg}
Let $E$ be a Banach space and $T_1,T_2, \dots, T_n$ commuting
mean ergodic operators (i.e., $\{T^n:n\in \NN\}$ is a mean-ergodic semigroup).
Then the equality
\begin{equation*}
\ker (T_1-\Id)\cdots (T_n-\Id)=\ker
(T_1-\Id)+\cdots+\ker (T_n-\Id)
\end{equation*}
holds.
\end{corollary}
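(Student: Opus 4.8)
The plan is to deduce Corollary~\ref{cor:mearg} directly from the preceding theorem by taking $\mathscr{S}_j := \{T_j^k : k \in \NN\}$ (as a unital semigroup, including $T_j^0 = \Id$). First I would observe that each $\mathscr{S}_j$ is a mean-ergodic operator semigroup precisely because $T_j$ is assumed to be a mean-ergodic operator; this is exactly the parenthetical remark in the statement, since mean-ergodicity of a single operator is by definition the mean-ergodicity of its cyclic semigroup $\{T_j^k : k \in \NN\}$. Next I would check that the commutation hypothesis of the theorem is satisfied: if $T_i T_j = T_j T_i$ for all $i,j$, then any power $T_i^k$ commutes with any power $T_j^\ell$, so $ST = TS$ for $S \in \mathscr{S}_i$, $T \in \mathscr{S}_j$, $i \neq j$, as required.

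The only remaining point is to identify the kernels appearing in the theorem's conclusion with those in the corollary. Here I would note that for a single operator $T$ one has $\ker(\mathscr{S}-\Id) = \bigcap_{k \in \NN} \ker(T^k - \Id) = \ker(T - \Id)$: the inclusion $\subseteq$ is immediate (take $k=1$), while $\supseteq$ follows from the algebraic factorization $T^k - \Id = (T - \Id)(T^{k-1} + \cdots + \Id)$, so that any vector fixed by $T$ is fixed by every $T^k$. Similarly, for the product on the left-hand side, $\ker(\mathscr{S}_1 - \Id)\cdots(\mathscr{S}_n - \Id)$ means $\bigcap \ker(S_1 - \Id)\cdots(S_n - \Id)$ over all $S_j \in \mathscr{S}_j$, and the same factorization argument (applied one coordinate at a time, using that the operators commute so the factors can be reordered freely) shows this equals $\ker(T_1 - \Id)\cdots(T_n - \Id)$. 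With these identifications the conclusion of the theorem becomes verbatim the conclusion of the corollary.

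I do not anticipate a genuine obstacle here: the corollary is a straightforward specialization, and the substantive content lies entirely in the theorem (mean-ergodicity of the cyclic semigroup being the hypothesis that makes the reflexive-space and power-bounded cases go through). The one step worth stating carefully is the kernel identification $\ker(\mathscr{S}_j - \Id) = \ker(T_j - \Id)$ and its product analogue, since without it the translation between the two formulations is not quite literal; but this is the elementary factorization observation above and needs no more than a sentence. Thus the proof is essentially: apply the theorem to the cyclic semigroups, then rewrite the kernels.

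\begin{proof}[Proof of Corollary~\ref{cor:mearg}]
For $j=1,\dots,n$ set $\mathscr{S}_j:=\{T_j^k:k\in\NN\}$, a unital subsemigroup of $\LL(E)$ which by hypothesis is mean ergodic. Since the $T_j$ commute, so do all their powers, hence $ST=TS$ whenever $T\in\mathscr{S}_i$, $S\in\mathscr{S}_j$ with $i\neq j$. For a single operator $T$ the factorization $T^k-\Id=(T-\Id)(T^{k-1}+\cdots+\Id)$ gives $\ker(T^k-\Id)\supseteq\ker(T-\Id)$, and the reverse inclusion is trivial, so $\ker(\mathscr{S}_j-\Id)=\ker(T_j-\Id)$; applying the same reordering-and-factorization argument coordinatewise yields $\ker(\mathscr{S}_1-\Id)\cdots(\mathscr{S}_n-\Id)=\ker(T_1-\Id)\cdots(T_n-\Id)$. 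The preceding theorem now gives
\begin{equation*}
\ker(T_1-\Id)\cdots(T_n-\Id)=\ker(T_1-\Id)+\cdots+\ker(T_n-\Id),
\end{equation*}
as claimed.
\end{proof}
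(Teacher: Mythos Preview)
Your proposal is correct and is exactly the intended specialization: the paper states the corollary without proof, since it follows immediately from the preceding theorem by taking $\mathscr{S}_j=\{T_j^k:k\in\NN\}$, and your kernel identifications via the factorization $T^k-\Id=(T-\Id)(T^{k-1}+\cdots+\Id)$ together with commutativity are precisely the elementary observations needed to translate the theorem's conclusion into that of the corollary.
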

\begin{example}\label{examp:cpt}
If for all $x\in E$ the orbits $\{T_j^nx:n\in \NN\}$ are weakly (or strongly) relatively compact then the above Corollary \ref{cor:mearg} applies, see, e.g., \cite[Sec.{} 8.4]{EFHN}.
\end{example}
\section{One-parameter semigroups}\label{sec:c0}
We turn our attention to operator semigroups that are continuously parametrized by $\RR_+$.
A \emph{$C_0$-semigroup} on a Banach space $E$ is a unital semigroup homomorphism $T:\RR_+\to \LLL(E)$ that is continuous with respect to the strong operator topology on $\LLL(E)$, denoted often as $(T(t))_{t\geq0}$. A \emph{$C_0$-group} is a $C_0$-semigroup consisting of continuously invertible operators, hence extending to a strongly continuous group homomorphism $T:\RR\to \LLL(E)$, and giving rise to the backward semigroup $(T^{-1}(t))_{t\geq 0}$. We refer to \cite{EN00} for a detailed account on these semigroups. Kadets and Shumyatskiy proved in \cite{KSadd} the following:
\begin{theorem}[Kadets, Shumyatskiy]
Let $(T(t))_{t\in\RR}$ be a $C_0$-group that is bounded, i.e.~$\|T(t)\|\leq M$ for all $t\in\RR$. Then for every 
for every $t_1,t_2\in \RR$
\begin{equation*}
\ker(T(t_1)-\Id)(T(t_2)-\Id)=\ker(T(t_1)-\Id)+\ker(T(t_2)-\Id).
\end{equation*}
\end{theorem}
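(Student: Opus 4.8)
The plan is to forget operators for a moment and look at the single orbit $u(s):=T(s)x$ attached to a vector $x$ with $(T(t_1)-\Id)(T(t_2)-\Id)x=0$: I would show that the difference equation forces $u$ to be almost periodic with a very thin (discrete) frequency set, and then read off the decomposition of $x$ by collecting Fourier--Bohr coefficients. First the routine reductions: the inclusion ``$\supseteq$'' is automatic, so only ``$\subseteq$'' is at stake, and since a $C_0$-group is invertible with $\ker(T(t)-\Id)=\ker(T(-t)-\Id)$, I may assume $t_1,t_2>0$. For such an $x$ the orbit $u$ is bounded, and it is uniformly continuous on $\RR$ because $\|u(s+h)-u(s)\|=\|T(s)(T(h)x-x)\|\le M\|T(h)x-x\|$.

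Next, the spectral step. Rewriting the difference equation in terms of translates, $u$ satisfies $\nu\ast u=0$ for the finitely supported measure $\nu=(\delta_{-t_1}-\delta_0)\ast(\delta_{-t_2}-\delta_0)$, with $\widehat\nu(\lambda)=(e^{it_1\lambda}-1)(e^{it_2\lambda}-1)$. By the standard spectral inclusion for the translation group on $\BUC(\RR,E)$, the (Beurling) spectrum of $u$ is contained in $\widehat\nu^{-1}(0)=\tfrac{2\pi}{t_1}\ZZ\cup\tfrac{2\pi}{t_2}\ZZ$, which is a locally finite — in particular countable — subset of $\RR$. I would then invoke the Loomis-type theorem that a bounded uniformly continuous function with countable spectrum is almost periodic, in the form valid in an arbitrary Banach space once the spectrum is \emph{discrete} (every spectral point isolated), so that the $c_0$-type obstructions to the general countable-spectrum statement do not arise. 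Hence $u$ is almost periodic.

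Then the harvest. For $\lambda\in\RR$ let $c_\lambda:=\mathcal M_s[\,u(s)e^{-i\lambda s}\,]$ be the Fourier--Bohr coefficient; it exists by almost periodicity and vanishes off $\tfrac{2\pi}{t_1}\ZZ\cup\tfrac{2\pi}{t_2}\ZZ$. From shift-invariance of the mean together with $u(s+h)=T(h)u(s)$ one gets $T(h)c_\lambda=e^{i\lambda h}c_\lambda$ for all $h$, so $c_\lambda$ is an eigenvector of the group; in particular $c_{2\pi n/t_1}\in\ker(T(t_1)-\Id)$ and $c_{2\pi m/t_2}\in\ker(T(t_2)-\Id)$ for all $n,m\in\ZZ$ (with $c_0$ lying in both). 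Define $x_1$ as the value at $s=0$ of the Bohr--Fej\'er means of $\sum_n c_{2\pi n/t_1}e^{2\pi i n s/t_1}$, and $x_2$ likewise from the frequencies $2\pi m/t_2$ with $m\neq0$. Each Fej\'er mean defining $x_1$ lies in the closed subspace $\ker(T(t_1)-\Id)$ and each one defining $x_2$ in $\ker(T(t_2)-\Id)$, so $x_1\in\ker(T(t_1)-\Id)$ and $x_2\in\ker(T(t_2)-\Id)$; and by Bohr's approximation theorem the Fej\'er means of the full Fourier--Bohr series of $u$ converge to $u$, so $x_1+x_2=u(0)=x$, which is exactly the desired decomposition.

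The step I expect to be the obstacle is the harmonic-analytic input in the second paragraph: one must be certain that a bounded uniformly continuous $E$-valued function whose spectrum sits in a locally finite set is almost periodic for \emph{every} Banach space $E$. This is precisely where the discreteness of $\tfrac{2\pi}{t_1}\ZZ\cup\tfrac{2\pi}{t_2}\ZZ$ — not merely its countability — is essential: for accumulating countable spectra the conclusion can fail (with $E\supseteq c_0$), but it is unconditional when each spectral point is isolated, since then every Fourier--Bohr coefficient comes from an isolated-point ergodic projection and $u$ can be recovered from its coefficients. A reader who wishes to sidestep this can observe that the commensurable case $t_1/t_2\in\QQ$ is elementary ($u$ is periodic and $F:=\ker(T(t_1)-\Id)$ splits into finitely many eigenspaces of $T(t_2)|_F$), but the incommensurable case truly needs the almost-periodicity (or an equivalent) input — soft mean-ergodic arguments only place the relevant vector in the \emph{closure} of $\ran(T(t_2)|_F-\Id)$, which need not be closed.
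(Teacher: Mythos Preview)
Your argument is correct, but it differs from what the paper does: the paper gives no proof of this theorem at all, merely citing Kadets and Shumyatskiy \cite{KSadd} and describing their method in one clause as ``a clever argument establishing the relative compactness of orbits of the vectors in $\ker(T(t_1)-\Id)(T(t_2)-\Id)$''. So both routes aim at the same intermediate goal---showing $x\in E_{\aap}$---but by different means: the original by a direct compactness argument on the orbit, you by computing the Beurling spectrum of $u(s)=T(s)x$ and invoking a vector-valued Loomis-type theorem. Your spectral step is sound: the key observation that $\tfrac{2\pi}{t_1}\ZZ\cup\tfrac{2\pi}{t_2}\ZZ$ is \emph{locally finite} (not merely countable) makes the Loomis step unconditional in $E$, since the Fej\'er kernel $K_N\ast u$ then has \emph{finite} spectrum, hence is a trigonometric polynomial, and $K_N\ast u\to u$ uniformly by uniform continuity. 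This is exactly the discreteness-versus-countability distinction you flag, and it is what rescues the argument from the $c_0$-obstruction that forces the paper, in its Theorem~\ref{thm:nocodecomp} for general $n$, to assume $c_0\not\hookrightarrow E$ and appeal to Basit's theorem.

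Two remarks on the harvest. First, once you have $x\in E_{\aap}$ you could finish in one line via the paper's Lemma~\ref{l:apdecomp} (mean ergodicity on $E_{\aap}$) rather than extracting Fourier--Bohr coefficients by hand; this sidesteps any worry about convergence of the split sub-series. Second, your direct extraction does work, but the sentence ``the Fej\'er means of the full Fourier--Bohr series of $u$ converge to $u$, so $x_1+x_2=u(0)$'' hides a small step: you need the two partial Fej\'er sums (over $\tfrac{2\pi}{t_1}\ZZ$ and over $\tfrac{2\pi}{t_2}\ZZ\setminus\{0\}$) to converge \emph{separately}. In the incommensurable case this is fine because $u$ extends to a continuous function on the $2$-torus via $s\mapsto(e^{2\pi i s/t_1},e^{2\pi i s/t_2})$, and averaging over one circle factor gives a continuous $u_1$; in the commensurable case you already note the reduction to a periodic problem. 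What your approach buys is an explicit, constructive decomposition and a transparent reason why $n=2$ is special (two arithmetic progressions always form a discrete set, three need not); what the paper's cited approach buys is avoiding the translation-spectrum machinery altogether.
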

 Surprisingly enough, the proof in \cite{KSadd}---a clever argument establishing the relative compactness of orbits of the vectors in $\ker(T(t_1)-\Id)(T(t_2)-\Id)$---does not directly work for more than $2$ operators, at least it is not immediately seen how. On the other hand, it essentially makes use of the invertibility of the operators $T(t_1)$ and $T(t_2)$. We consider two variations of the Kadets--Shumyatskiy result: 1) by allowing arbitrary finite number of operators but restricting to certain type of Banach spaces; 2) by dropping the assumption of the invertibility of the operators but restricting to certain type of semigroups.

To prove the validity of
\begin{equation*}
\ker(T(t_1)-\Id)\cdots (T(t_n)-\Id)=\ker(T(t_1)-\Id)+\cdots+\ker(T(t_n)-\Id)
\end{equation*} our strategy is to prove that a vector $x\in\ker(T(t_1)-\Id)\cdots(T(t_n)-\Id)$ belongs to a $(T(t))_{t\geq0}$-invariant subspace on which the semigroup operators are all mean ergodic. Thus finally one will be in the position to apply Corollary \ref{cor:mearg}.
\begin{definition}
Let $E$ be a Banach space and $(T(t))_{t\geq0}$ be a $C_0$-semigroup. A vector $x\in E$ is called \emph{asymptotically almost periodic} (with respect to this semigroup)  if the (forward) orbit
\begin{equation*}
O_T(x):=O(x):=\bigl\{T(t) x:t\geq 0\bigr\}
\end{equation*}
is relatively compact in $E$. Denote by $E_{\aap}$ the set of such vectors.
 If $(T(t))_{t\in\RR}$ is a $C_0$-group, then a vector is called \emph{almost periodic}, if both $O_T(x)$ and $O_{T^{-1}}(x)$ are relatively compact. These vectors constitute the set $E_{\ap}$.
\end{definition}
Suppose that  the $C_0$-semigroup $(T(t))_{t\geq 0}$ is bounded. It is then easy, and in fact well-known,  that $E_{\aap}\subseteq E$ is a closed subspace invariant under $T(t)$ for every $t\geq 0$. The analogous statement is true  for bounded $C_0$-groups and for $E_\ap$. 
 Trivially, for any $s> 0$ we have
\begin{equation*}
\ker(T(s)-\Id)\subseteq E_{\aap},
\end{equation*}
this being a consequence of the strong continuity of $(T(t))_{t\geq 0}$. By Example \ref{examp:cpt} the restricted semigroup to $E_{\aap}$ consists of mean ergodic operators. Let us record a consequence of this fact.
\begin{lemma}\label{l:apdecomp}
We have
\begin{equation*}
E_{\aap}\cap \ker(T(t_1)-\Id)\cdots(T(t_n)-\Id)=\ker(T(t_1)-\Id)+\cdots+\ker(T(t_n)-\Id).
\end{equation*}
\end{lemma}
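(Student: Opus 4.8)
The plan is to transfer the identity to the invariant subspace $E_{\aap}$ and to invoke Corollary~\ref{cor:mearg} there. Since the semigroup is bounded, $E_{\aap}$ is a closed linear subspace of $E$ that is invariant under $T(t)$ for every $t\geq 0$, so each $T_j:=T(t_j)|_{E_{\aap}}$, $j=1,\dots,n$, is a well-defined bounded operator on the Banach space $E_{\aap}$, and the $T_j$ pairwise commute because the $T(t_j)$ do. Thus the only thing needed to apply Corollary~\ref{cor:mearg} on $E_{\aap}$ is that each $T_j$ is mean ergodic as an operator on $E_{\aap}$.

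To check this, fix $y\in E_{\aap}$. By definition the forward orbit $O(y)=\{T(t)y:t\geq 0\}$ is relatively compact in $E$, and since $E_{\aap}$ is closed its closure lies in $E_{\aap}$; hence $O(y)$ is relatively compact in $E_{\aap}$ as well. As $\{T_j^m y:m\in\NN\}=\{T(mt_j)y:m\in\NN\}\subseteq O(y)$, the cyclic semigroup $\{T_j^m:m\in\NN\}$ has relatively compact orbits in $E_{\aap}$, so by Example~\ref{examp:cpt} the operator $T_j$ is mean ergodic. Consequently Corollary~\ref{cor:mearg} applies to $T_1,\dots,T_n\in\LL(E_{\aap})$ and gives
\begin{equation*}
\ker(T_1-\Id)\cdots(T_n-\Id)=\ker(T_1-\Id)+\cdots+\ker(T_n-\Id),
\end{equation*}
where all kernels are taken inside $E_{\aap}$.

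It remains to identify the objects in this equality with those in the statement. On the one hand $\ker(T_j-\Id)=\{y\in E_{\aap}:T(t_j)y=y\}=\ker(T(t_j)-\Id)$, the last step because $\ker(T(t_j)-\Id)\subseteq E_{\aap}$ (if $T(t_j)x=x$ then $t\mapsto T(t)x$ is $t_j$-periodic and continuous, so its orbit is compact). On the other hand, since $E_{\aap}$ is invariant under each commuting operator $T(t_i)-\Id$, the restriction of their product is the product of the restrictions, so $\ker(T_1-\Id)\cdots(T_n-\Id)=E_{\aap}\cap\ker(T(t_1)-\Id)\cdots(T(t_n)-\Id)$. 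Substituting these two identifications into the displayed equality yields exactly the assertion of the lemma, both inclusions at once.

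I expect no serious obstacle: the one point that genuinely requires care is that relative compactness of $O(y)$ in $E$ passes to relative compactness in the subspace $E_{\aap}$, which is precisely where closedness of $E_{\aap}$ (hence boundedness of the semigroup) enters; the remainder is the commutativity of the $T(t_j)$ together with direct appeals to Example~\ref{examp:cpt} and Corollary~\ref{cor:mearg}.
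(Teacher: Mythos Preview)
Your proposal is correct and follows exactly the approach the paper indicates: the paragraph immediately preceding the lemma already notes that $\ker(T(s)-\Id)\subseteq E_{\aap}$ and that, by Example~\ref{examp:cpt}, the restricted operators on $E_{\aap}$ are mean ergodic, so the lemma is recorded as a direct consequence via Corollary~\ref{cor:mearg}. You have simply supplied the details (closedness of $E_{\aap}$, passage of relative compactness to the subspace, and the kernel identifications) that the paper leaves implicit.
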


 \begin{remark}\label{rem:omega0}
 \begin{enumerate}[1.]
 \item 
Let $\omega_0(T)\in [-\infty,\infty)$ be the \emph{growth bound} of the semigroup $T$, i.e., $\omega_0(T)=\frac1t\log r(T(t))$  for any $t>0$, where $r$ denotes the spectral radius. If $\omega_0(T)<0$, then $\|T(t)\|\to 0$ as $t\to \infty$, hence $E=E_{\aap}$, and anyway $\ker(T(t)-\Id)=\{0\}$ for each $t\geq 0$. So that we may assume $\omega_0(T)\geq 0$.
\item Also if $T(t)\to 0$ or $T(t)\to Q\in \LL(E)$ in the strong operator topology as $t\to \infty$, then again $E=E_\aap$. Hence Corollary \ref{cor:mearg} again applies, and this is also the case if the previous convergences hold in the weak operator topology. All in all we see that the decomposition problem is only interesting if the semigroup is not stable (or convergent)  in any usual sense.  
\end{enumerate}
 \end{remark}
The next---quite standard---lemma further enlightens the role of strong continuity. The proof is included here only for sake of completeness.
\begin{lemma}\label{lem:aapchar}
Let $(T(t))_{t\geq0}$ be a bounded  $C_0$-semigroup, and let $x\in E$. Then $x\in E_{\aap}$ holds if and only if \begin{equation*}O(t,x):=\{T(t)^nx=T(nt)x:n\in \NN\}\end{equation*} is relatively compact for some/all $t>0$.
\end{lemma}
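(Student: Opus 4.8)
The plan is to prove the two directions separately. The forward implication is immediate: if $O(x)=\{T(t)x:t\ge0\}$ is relatively compact, then for any fixed $t>0$ the set $O(t,x)=\{T(nt)x:n\in\NN\}$ is a subset of $O(x)$ and hence relatively compact as well. The content is entirely in the converse: assume $O(t_0,x)$ is relatively compact for some single $t_0>0$, and deduce that the full forward orbit $O(x)$ is relatively compact. Write an arbitrary $s\ge0$ as $s=nt_0+r$ with $n\in\NN\cup\{0\}$ and $r\in[0,t_0)$, so that $T(s)x=T(r)T(nt_0)x$. The idea is to cover $O(x)$ by finitely many small balls using three ingredients: relative compactness of $O(t_0,x)$, boundedness of the semigroup, and uniform continuity of the map $r\mapsto T(r)y$ for $y$ ranging over the (pre)compact set $\overline{O(t_0,x)}$.

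First I would fix $\veps>0$ and set $M:=\sup_{t\ge0}\|T(t)\|<\infty$. Since $K:=\overline{O(t_0,x)}$ is compact, strong continuity of $(T(t))_{t\ge0}$ together with a standard equicontinuity argument on compact sets (the map $[0,t_0]\times K\to E$, $(r,y)\mapsto T(r)y$, is continuous, hence uniformly continuous on the compact domain) yields a $\delta>0$ such that $|r-r'|<\delta$ implies $\|T(r)y-T(r')y\|<\veps/2$ for all $y\in K$. Choose a finite partition $0=r_1<r_2<\dots<r_m$ of $[0,t_0]$ with mesh less than $\delta$, and choose a finite $\tfrac{\veps}{2M}$-net $y_1,\dots,y_p$ of $K$. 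Then for any $s=nt_0+r$ as above, pick $r_k$ with $|r-r_k|<\delta$ and $y_l$ with $\|T(nt_0)x-y_l\|<\tfrac{\veps}{2M}$; one estimates
\begin{equation*}
\|T(s)x-T(r_k)y_l\|\le \|T(r)T(nt_0)x-T(r_k)T(nt_0)x\|+\|T(r_k)\|\,\|T(nt_0)x-y_l\|<\tfrac{\veps}{2}+M\cdot\tfrac{\veps}{2M}=\veps.
\end{equation*}
Thus the finitely many vectors $\{T(r_k)y_l:1\le k\le m,\ 1\le l\le p\}$ form an $\veps$-net for $O(x)$, so $O(x)$ is totally bounded; since $E$ is complete, $O(x)$ is relatively compact, i.e.\ $x\in E_{\aap}$.

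The main obstacle — really the only subtle point — is justifying the uniform continuity of $(r,y)\mapsto T(r)y$ on $[0,t_0]\times K$: pointwise strong continuity alone is not enough, but joint continuity follows from the uniform boundedness of $\{T(r):r\in[0,t_0]\}$ combined with strong continuity, and then uniform continuity is automatic on the compact product $[0,t_0]\times K$. I should also note that the claim ``for some/all $t>0$'' is handled uniformly by this argument: the converse direction shows compactness of $O(t_0,x)$ for one $t_0$ forces $x\in E_{\aap}$, and the forward direction then gives compactness of $O(t,x)$ for every $t>0$, closing the equivalence.
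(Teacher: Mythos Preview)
Your proof is correct and follows essentially the same route as the paper: both arguments decompose $s=nt_0+r$ and combine the relative compactness of the discrete orbit $O(t_0,x)$ with the boundedness of the semigroup and strong continuity on the compact initial segment $[0,t_0]$. The only cosmetic difference is that you package the converse as a total-boundedness/$\veps$-net argument via uniform continuity of $(r,y)\mapsto T(r)y$ on $[0,t_0]\times K$, whereas the paper extracts a Cauchy subsequence directly; these are equivalent characterizations of relative compactness, so the substance is the same.
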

\begin{proof}
Since $O(t,x)\subseteq \{T(t)x:t\geq0\}$, we have the relative compactness of $O(t,x)$ whenever $x\in E_{\aap}$. The converse implication follows from the fact that on bounded subsets of $\LLL(E)$ strong convergence coincides with uniform  convergence on relatively compact sets. More specifically, suppose $O(t,x)$ is relatively compact for some $t>0$. If $(t_n)\subset \RR_+$ is an arbitrary sequence then $t_n=q_nt+r_n$ with $q_n\in \NN_0$ and $r_n\in [0,t)$. By  strong continuity the set $\{T(s)x:s\in [0,t]\}$ is compact. So, also by the assumptions, there is a subsequence $(n_k)$ such that $T(q_{n_k}t)x$ and $T(r_{n_k})x$ are both convergent as $k\to\infty$.
We now have
\begin{align*}
&\|T(t_{n_k})x-T(t_{n_\ell})x\|=\|T(q_{n_k}t)T(r_{n_k})x-T(q_{n_\ell}t)T(r_{n_\ell})x\|\\
&\quad\leq \|T(r_{n_k})(T(q_{n_k}t)-T(q_{n_\ell}t))x\|+\|T(q_{n_\ell}t)(T(r_{n_k})x-T(r_{n_\ell})x)\|\\
&\quad\leq M\|(T(q_{n_k}t)x-T(q_{n_\ell}t)x)\|+M\|(T(r_{n_k})x-T(r_{n_\ell})x)\|\to 0\quad\mbox{as $k,\ell\to\infty$},
\end{align*}
hence $(T(t_{n_k})x)$ is a Cauchy sequence. Thus we have proved the relative compactness of the set $\{T(t)x:t\geq0\}$.
\end{proof}

\begin{lemma}\label{lem:apchar}
For a bounded $C_0$-group $(T(t))_{t\in \RR}$ and $x\in E_\aap$ 
\begin{equation*}
\bigl\{T(t)x:t\in \RR\bigr\}\subseteq E
\end{equation*}
is relatively compact, i.e., $E_\aap=E_\ap$.
\end{lemma}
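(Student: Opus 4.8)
The plan is to show that if $x\in E_{\aap}$, i.e.\ the forward orbit $\{T(t)x:t\geq 0\}$ is relatively compact, then automatically the backward orbit $\{T(t)x:t<0\}$ is relatively compact as well, so that $\{T(t)x:t\in\RR\}$ is relatively compact and hence $x\in E_{\ap}$; the reverse inclusion $E_{\ap}\subseteq E_{\aap}$ is trivial from the definition. The key observation is that on the closed invariant subspace $E_{\aap}$ each operator $T(t)$ restricts to an invertible isometry-like operator: more precisely, by Example~\ref{examp:cpt} and the discussion preceding Lemma~\ref{l:apdecomp}, the restricted semigroup on $E_{\aap}$ consists of mean ergodic operators with relatively compact forward orbits, and a power-bounded operator with relatively (weakly) compact orbits whose ambient group is invertible must restrict to an \emph{invertible} operator on the orbit closure of each vector.

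First I would fix $x\in E_{\aap}$ and pass to $Y:=\overline{\mathrm{span}}\,\{T(t)x:t\geq 0\}$, the smallest closed $T(t)_{t\geq 0}$-invariant subspace containing $x$. Since $O_T(x)$ is relatively compact, $Y$ is separable and the restricted semigroup $(T(t)|_Y)_{t\geq 0}$ has relatively compact orbits on a dense set, hence (by boundedness and a standard $3\varepsilon$-argument) on all of $Y$. Next I would use the key structural fact: for $S:=T(s)|_Y$ with $s>0$ fixed, $S$ is power-bounded and $\{S^n y:n\in\NN\}$ is relatively compact for every $y\in Y$; by the Jacobs--de~Leeuw--Glicksberg theory (see \cite[Sec.~16]{EFHN}) the orbit closure $\overline{\{S^n:n\in\NN\}}$ in the strong operator topology on $Y$ is a compact topological \emph{group}, so in particular $S|_Y$ is invertible with inverse again a strong limit of a net of powers $S^{n_\alpha}$. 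Since $S=T(s)$ is a priori invertible on $E$ (being part of a $C_0$-group) with $T(s)^{-1}=T(-s)$, the inverse computed inside $Y$ must coincide with $T(-s)|_Y$; thus $T(-s)$ maps $Y$ into $Y$, and $\{T(-s)^n y:n\in\NN\}=\{T(-ns)y:n\in\NN\}$ lies in the compact group and is therefore relatively compact.

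Then I would upgrade this from the single operator $T(-s)$ to the whole backward semigroup $(T(-t))_{t\geq 0}$ by exactly the argument of Lemma~\ref{lem:aapchar} applied to the backward $C_0$-semigroup: for any sequence $t_n\geq 0$ write $t_n=q_ns+r_n$ with $q_n\in\NN_0$, $r_n\in[0,s)$; strong continuity makes $\{T(-r)x:r\in[0,s]\}$ compact, relative compactness of $\{T(-ns)x\}$ (just established) lets one extract a convergent subsequence of $T(-q_{n_k}s)x$, and the same telescoping estimate using $\|T(t)\|\leq M$ for all $t\in\RR$ shows $(T(-t_{n_k})x)$ is Cauchy. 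Hence $O_{T^{-1}}(x)=\{T(-t)x:t\geq 0\}$ is relatively compact, which together with $x\in E_{\aap}$ gives $x\in E_{\ap}$; combined with the trivial inclusion $E_{\ap}\subseteq E_{\aap}$ we conclude $E_{\aap}=E_{\ap}$, and relative compactness of $\{T(t)x:t\in\RR\}$ follows.

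The main obstacle is the middle step: identifying the inverse of $T(s)|_Y$ \emph{within} $Y$ with $T(-s)|_Y$, i.e.\ showing $T(-s)$ leaves $Y$ invariant. This is where the group hypothesis is essential — for a mere semigroup it can fail. The cleanest route is the JdLG one above (the orbit closure is a compact group, so it contains an inverse element, and strong limits of $T(ns)$ commute with everything and must act as the genuine inverse $T(-s)$ since the $C_0$-group already furnishes one); an alternative is to note directly that $T(-s)(O_T(x))\subseteq \overline{O_T(x)}$ because $T(-s)$ is a strong limit of $T(n_\alpha s)$ on the compact set $\overline{O_T(x)}$, hence $T(-s)Y\subseteq Y$. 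Either way, once invariance is in hand the rest is the routine approximation of Lemma~\ref{lem:aapchar}, which I would not spell out again in full.
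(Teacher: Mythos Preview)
Your argument is correct, and at its core it rests on the same idea as the paper's proof: the Jacobs--de~Leeuw--Glicksberg splitting together with the observation that for a \emph{bounded} group the stable part is trivial. The paper, however, invokes this in one stroke at the level of the continuous semigroup on $E_{\aap}$, citing the decomposition
\[
E_{\aap}=E_{\ap}\oplus\{y:T(t)y\to 0\ \text{as }t\to\infty\}
\]
and noting that the second summand vanishes because $\|y\|\leq M\|T(t)y\|\to 0$. Your route---applying JdLG to the single operator $T(s)|_Y$, identifying its inverse inside $Y$ with $T(-s)|_Y$, and then lifting to continuous time via Lemma~\ref{lem:aapchar}---reaches the same conclusion but with considerably more scaffolding.

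One point deserves to be made explicit in your write-up: the assertion that $\overline{\{S^n:n\in\NN\}}$ is a compact \emph{group} (rather than merely a compact semitopological semigroup with a minimal idempotent $Q$) is not automatic from JdLG; it is equivalent to $Q=\Id$, i.e., to the stable part $\{y\in Y:0\in\overline{\{S^ny\}}\}$ being trivial. You correctly flag that the group hypothesis is essential here, but you should spell out the one-line reason: if $S^{n_k}y\to 0$ then $\|y\|=\|T(-n_ks)T(n_ks)y\|\leq M\|S^{n_k}y\|\to 0$, so $y=0$. This is precisely the same computation that makes the paper's direct argument work, and once it is stated your detour through the discrete operator and Lemma~\ref{lem:aapchar} becomes unnecessary---though it remains a valid alternative.
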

\begin{proof}
Denote by $E_{\ap}$ the set of vectors for which the set under consideration is relatively compact. This is then a closed subspace of $E$ invariant under the semigroup operators. The assertion follows from the  Jacobs--de Leeuw--Glicksberg decomposition:
\begin{equation*}
E_{\aap}=E_{\ap}\oplus \{y:T(t)y\to 0\mbox{ for $t\to \infty$}\},
\end{equation*}
see, e.g., \cite[Ch.{} 15]{EFHN}. For bounded groups the second direct summand is $\{0\}$.
\end{proof}
Under certain  assumptions on the Banach space $E$ the Kadets, Shumyatskiy result generalizes for each $n\in \NN$:
\begin{theorem}\label{thm:nocodecomp}
Let $E$ be a Banach space that does not contain a copy of $\co$, and let $(T(t))_{t\in \RR}$ be a bounded $C_0$-group.
Then for every $n\in \NN$ and $t_1,\dots,t_n\in \RR$ we have
\begin{equation*}
\ker(T(t_1)-\Id)\cdots(T(t_n)-\Id)=\ker(T(t_1)-\Id)+\cdots+\ker(T(t_n)-\Id).
\end{equation*}
\end{theorem}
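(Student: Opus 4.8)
The plan is to reduce everything to Lemma \ref{l:apdecomp} by showing that the kernel $K:=\ker(T(t_1)-\Id)\cdots(T(t_n)-\Id)$ is actually contained in $E_{\aap}$; once this is known, the lemma gives the desired equality (and the inclusion ``$\supseteq$'' is free, as noted throughout the excerpt). Since $(T(t))_{t\in\RR}$ is a bounded $C_0$-group, by Lemma \ref{lem:apchar} it suffices to show that every $x\in K$ has relatively compact forward orbit $\{T(t)x:t\geq0\}$, and by Lemma \ref{lem:aapchar} it is even enough to show that the discrete orbit $\{T(s)^m x:m\in\NN\}$ is relatively compact for a single convenient $s>0$. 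By Remark \ref{rem:omega0} we may assume $\omega_0(T)\geq 0$ and that the semigroup is not stable in any sense, so there is genuine work to do.

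I would argue by induction on $n$. For $n=1$ we have $K=\ker(T(t_1)-\Id)\subseteq E_{\aap}$ trivially (strong continuity), so assume the statement for $n-1$. Write $S:=T(t_n)$ and let $y:=(S-\Id)x$. Then $y\in\ker(T(t_1)-\Id)\cdots(T(t_{n-1})-\Id)$, so by the inductive hypothesis $y$ decomposes as $y=y_1+\cdots+y_{n-1}$ with $(T(t_j)-\Id)y_j=0$; in particular $y\in E_{\aap}=E_{\ap}$ (Lemma \ref{lem:apchar}), so $y$ has relatively compact two-sided orbit under the group. The key point is then: if $y=(S-\Id)x$ has relatively compact orbit under the powers of the invertible operator $S$ (indeed under the whole group), and $E$ contains no copy of $\co$, then $x$ itself has relatively compact orbit under the powers of $S$. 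This is where the hypothesis on $E$ enters: the telescoping identity $S^m x = x + \sum_{k=0}^{m-1} S^k y$ exhibits $S^m x$ as a partial sum of the (formal) series $\sum_k S^k y$, and in a Banach space not containing $\co$, a series whose partial sums stay bounded and whose terms lie in a relatively (weakly) compact set has relatively compact partial sums — this is precisely the Bessaga--Pełczyński type phenomenon (a bounded series of terms drawn from a compact set in a space without $\co$ behaves well). Concretely, I would invoke that $\co\not\hookrightarrow E$ forces weakly unconditionally Cauchy series to converge, or use the characterization via relatively weakly compact ranges of the partial-sum operator, to conclude $\{S^m x:m\in\NN\}$ is relatively compact; then Lemma \ref{lem:aapchar} upgrades this to $x\in E_{\aap}$.

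Having shown $K\subseteq E_{\aap}$, Lemma \ref{l:apdecomp} finishes the proof. The main obstacle is the middle step: passing from relative compactness of the orbit of $(S-\Id)x$ to that of the orbit of $x$. The boundedness of the group gives $\sup_m\|S^m x\|<\infty$, which is exactly the boundedness of the partial sums $\sum_{k=0}^{m-1}S^k y + x$; the orbit $\{S^k y\}$ is relatively compact; and the no-$\co$ hypothesis is the standard dividing line guaranteeing that such a bounded partial-sum sequence with relatively compact increments is itself relatively compact (equivalently, that the associated series is unconditionally convergent, whence its tails vanish uniformly and the partial sums form a totally bounded set). Care must be taken to handle the one-sided (forward) nature of the orbit, but since $S$ is invertible with $\sup_{k\in\ZZ}\|S^k\|<\infty$ the two-sided orbit is equally controlled, and Lemma \ref{lem:apchar} makes the distinction moot. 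I would also remark at the end that this argument uses invertibility of the $T(t_j)$ only through the group boundedness needed to make the partial sums bounded on both sides; for a merely bounded $C_0$-semigroup the forward estimate already suffices, but then one must work with the forward orbit directly, which is why the theorem is stated for groups.
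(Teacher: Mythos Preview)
Your overall architecture matches the paper exactly: prove by induction on $n$ that $\ker(T(t_1)-\Id)\cdots(T(t_n)-\Id)\subseteq E_{\aap}$ and then invoke Lemma~\ref{l:apdecomp}. The gap is in the lifting step, where you pass from $(S-\Id)x\in E_{\aap}$ to $x\in E_{\aap}$. None of the three mechanisms you propose actually works. The series $\sum_k S^k y$ is \emph{not} weakly unconditionally Cauchy in general: already for an eigenvector $y$ of $S$ with unimodular eigenvalue $\lambda\neq 1$ one has $\sum_k|\varphi(S^k y)|=|\varphi(y)|\sum_k 1=\infty$ for any $\varphi$ not annihilating $y$, so Bessaga--Pe\l czy\'nski does not apply. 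For the same reason the series is not (unconditionally) convergent---if it were, $S^m x$ would converge, which fails for any nontrivial periodic orbit. And the bare statement ``bounded partial sums with terms in a compact set $\Rightarrow$ relatively compact partial sums in a space without $\co$'' is false even in $\ell^2$: walk in small steps $\pm e_j/N_j$ from $0$ out to $e_j$ and back, with $N_j\to\infty$; the increments lie in a compact set (they tend to $0$), the partial sums stay in the unit ball, yet they visit every $e_j$ and hence are not relatively compact.

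What makes the lifting step go through is the extra structure you are not using: the increment sequence $n\mapsto S^n y$ is not merely range-relatively-compact, it is \emph{almost periodic} as a function $\ZZ\to E$ (its translates are just $S^k$ applied to it, and $\{S^k y\}$ is relatively compact). The paper packages this step as Proposition~\ref{prop:lift}, which is an immediate consequence of Basit's Bohr--Kadets type theorem (Theorem~\ref{thm:basit}): if $F:\ZZ\to E$ is bounded, $F(\cdot+1)-F$ is almost periodic, and $E$ contains no copy of $\co$, then $F$ is almost periodic. Applying this to $F(n)=T(na)x$ gives exactly what you need. So your plan is right, but the ``middle step'' requires Basit's theorem (or a reproof of it), not a direct appeal to Bessaga--Pe\l czy\'nski.
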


In order to prove this we shall need a special case of the Bohr--Kadets type result proved by Basit in \cite{Basit71}.

\begin{theorem}[Basit]\label{thm:basit}
Let $E$ be a Banach space, $F:\ZZ\to E$ be a bounded function with $F(1+\cdot)-F(\cdot)$ being almost periodic in $\ell^\infty(\ZZ;E)$. If $E$ contains no copy of $\co$, then $F$ is almost periodic.
\end{theorem}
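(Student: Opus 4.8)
The plan is to prove that $F$ is almost periodic by first establishing that it is \emph{weakly} almost periodic and then upgrading weak to norm almost periodicity, the hypothesis that $E$ contains no copy of $\co$ entering precisely at the upgrade. Throughout I use the characterisation that a bounded $G:\ZZ\to E$ is almost periodic exactly when $\{G(\cdot+k):k\in\ZZ\}$ is relatively compact in $\ell^\infty(\ZZ;E)$, equivalently when $G$ is a uniform limit of $E$-valued trigonometric polynomials (Bochner--Fej\'er approximation). Writing $g:=F(1+\cdot)-F(\cdot)$, which is almost periodic by hypothesis, I would first record that $g$ has vanishing mean: since $\sum_{k=0}^{N-1}g(k)=F(N)-F(0)$ and $F$ is bounded, the Ces\`aro means of $g$ tend to $0$, so the Fourier--Bohr coefficient $a_0=M_n[g(n)]$ vanishes and the (countable) spectrum $\Lambda$ of $g$ omits $0$.

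Next I would pass to functionals. For every $x^\ast\in E^\ast$ the scalar function $x^\ast\circ F$ is bounded and its first difference $x^\ast\circ g$ is almost periodic, so by the classical scalar Bohr theorem $x^\ast\circ F$ is almost periodic; thus $F$ is weakly almost periodic. The difference operator forces the relation $a_\lambda=(\ee^{i\lambda}-1)\,b_\lambda$ on Fourier--Bohr coefficients, which determines the candidate coefficients of $F$ as genuine vectors $b_\lambda:=a_\lambda/(\ee^{i\lambda}-1)\in E$ for $\lambda\in\Lambda$ (here $a_\lambda\in E$ is the mean of $g$ against $\ee^{-i\lambda\cdot}$ and $\lambda\neq0$).

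To localise the difficulty I would split the spectrum at a scale $\delta>0$. Fix a smooth cut-off $\chi_\delta$ on the circle $\TT$ that vanishes near $0$ and equals $1$ on $\{\delta\le|\lambda|\le 2\pi-\delta\}$, put $m_\delta(\lambda):=\chi_\delta(\lambda)/(\ee^{i\lambda}-1)$, and note that $m_\delta$ smooth makes it the transform of some $k_\delta\in\ell^1(\ZZ)$. Then $F^{\mathrm{far}}_\delta:=k_\delta\ast g$ is almost periodic, because the convolution of the almost periodic $g$ with an $\ell^1$ kernel is almost periodic; its coefficients are $\chi_\delta(\lambda)b_\lambda$, and its own first difference reproduces the part of $g$ with frequencies bounded away from $0$. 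This construction works over \emph{any} Banach space. Since the almost periodic functions form a closed subspace of $\ell^\infty(\ZZ;E)$, the whole proof reduces to the single estimate that the low-frequency remainder $N_\delta:=F-F^{\mathrm{far}}_\delta$, a bounded weakly almost periodic function whose spectrum clusters at $0$, satisfies $\|N_\delta\|_\infty\to0$ as $\delta\to0$.

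This last estimate is the main obstacle, and it is where the absence of $\co$ is indispensable: in $\co$ itself there are bounded primitives of almost periodic functions whose low-frequency part does not vanish, and the assertion fails. The engine is the Bessaga--Pe\l czy\'nski dichotomy, that in a space containing no copy of $\co$ every weakly unconditionally Cauchy series converges unconditionally. I would argue by contradiction: if $\|N_\delta\|_\infty\not\to0$, then one extracts frequencies $\lambda_j\to0$ together with increment vectors built from the values of $F$ (equivalently from the $b_{\lambda_j}$) whose signed finite sums stay uniformly bounded by a multiple of $\|F\|_\infty$ --- this uniform bound being exactly the boundedness of $F$ read along the almost-periods of $g$ --- yet which do not tend to $0$, and such a family spans a copy of $\co$, contradicting the hypothesis. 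Converting the sup-norm boundedness of $F$ into a bona fide weakly-unconditionally-Cauchy condition is the delicate point, since the accessible phases $\ee^{i\lambda_j n}$ fill out only the Kronecker closure of the chosen frequency block rather than all sign patterns; this I would handle by grouping the extracted frequencies into rationally almost independent blocks, along the lines of the arguments of Basit and of Kadets. Granting $\|N_\delta\|_\infty\to0$, the function $F=\lim_{\delta\to0}F^{\mathrm{far}}_\delta$ is a uniform limit of almost periodic functions, hence almost periodic, and matching Fourier--Bohr coefficients via the scalar uniqueness theorem confirms that this limit is $F$ itself.
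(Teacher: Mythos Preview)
The paper does not prove this statement; it is quoted from Basit's 1971 paper and used as a black box in the subsequent proposition. There is therefore no proof in the paper to compare your attempt against.

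On its own merits: your overall architecture is reasonable and correctly isolates where the $\co$ hypothesis must enter, and the high-frequency splitting via an $\ell^1$ multiplier is sound (modulo a small slip: $\|N_\delta\|_\infty\to 0$ cannot hold in general, since $F$ may carry a nonzero mean which $F^{\mathrm{far}}_\delta=k_\delta\ast g$ never sees; at best $N_\delta$ tends to a constant, which is harmless but should be stated correctly). However, what you present is not a proof, because the one genuinely hard step is left as a pointer to the literature. You yourself flag that converting the boundedness of $F$ into a bona fide weakly-unconditionally-Cauchy condition on the low-frequency coefficients is ``the delicate point'', obstructed by the fact that the accessible phases $\ee^{i\lambda_j n}$ reach only the Kronecker closure of the frequency block rather than all sign patterns, and you then say you would handle this ``along the lines of the arguments of Basit and of Kadets''. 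That sentence is the entire substance of the theorem: naming the technique (rationally independent blocking followed by Bessaga--Pe\l czy\'nski) is not the same as carrying it out, and without that step executed the argument is incomplete.
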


This has the following implication for us.
\begin{proposition}\label{prop:lift}
Let $(T(t))_{t\in\RR}$ be a bounded $C_0$-group on the Banach space $E$, which contains no isomorphic copy of $\co$.
If for some $a>0$ the vector $(T(a)-\Id)x$ is asymptotically almost periodic, then so is $x$.
\end{proposition}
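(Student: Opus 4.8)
The plan is to reduce the continuous-parameter statement to the discrete Basit theorem (Theorem~\ref{thm:basit}) by sampling the orbit of $x$ along the arithmetic progression generated by $a$. First I would set $y:=(T(a)-\Id)x$ and, using the hypothesis $y\in E_{\aap}$ together with Lemma~\ref{lem:apchar} (so that $E_{\aap}=E_{\ap}$ for a bounded group), observe that the full two-sided orbit $\{T(t)y:t\in\RR\}$ is relatively compact. In particular the restriction $\{T(na)y:n\in\ZZ\}$ is relatively compact, and since $T(na)y=T(a)^{n}y$ is exactly $(T(a)-\Id)$ applied to $T(na)x$, the function $F:\ZZ\to E$, $F(n):=T(na)x=T(a)^{n}x$, has the property that $F(n+1)-F(n)=T(a)^{n}y$ has relatively compact range. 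To invoke Basit's theorem I need slightly more, namely that $n\mapsto F(n+1)-F(n)$ is \emph{almost periodic} in $\ell^\infty(\ZZ;E)$, not merely that it has relatively compact range; but this is precisely the statement that $y$ is almost periodic for the (invertible, bounded, hence equicontinuous) operator $T(a)$, which follows again from the relative compactness of $\{T(a)^n y:n\in\ZZ\}$ by the standard characterization of almost periodicity of a single power-bounded invertible operator via relative compactness of the two-sided orbit. Basit's theorem then yields that $F$ itself is almost periodic in $\ell^\infty(\ZZ;E)$; in particular $\{T(a)^n x:n\in\ZZ\}$ is relatively compact.

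Having gained relative compactness of the discrete orbit $\{T(a)^n x:n\in\ZZ\}$, the second step is to upgrade from the subgroup $a\ZZ$ to all of $\RR$. Here I would apply Lemma~\ref{lem:aapchar}: for a bounded $C_0$-semigroup, relative compactness of $\{T(t)^n x:n\in\NN\}$ for \emph{some} $t>0$ is equivalent to $x\in E_{\aap}$. Taking $t=a$ and using that $\{T(a)^nx:n\in\NN\}\subseteq\{T(a)^nx:n\in\ZZ\}$ is relatively compact, we conclude $x\in E_{\aap}$, which is exactly the assertion. (One could alternatively run the argument through $E_{\ap}$ directly, but routing it through Lemma~\ref{lem:aapchar} is cleanest because it is already stated and proved above.)

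The main obstacle I anticipate is the bookkeeping around the two notions of "compact orbit": Basit's theorem is phrased with the hypothesis that the difference $F(1+\cdot)-F(\cdot)$ is \emph{almost periodic} in $\ell^\infty(\ZZ;E)$, whereas what falls out immediately from $y\in E_{\aap}=E_{\ap}$ is relative compactness of the two-sided orbit $\{T(a)^ny:n\in\ZZ\}$. The bridge is the equivalence, for the single power-bounded invertible operator $V:=T(a)$ on $E$, between (i) $\{V^ny:n\in\ZZ\}$ relatively compact and (ii) the sequence $n\mapsto V^ny$ being almost periodic in $\ell^\infty(\ZZ;E)$; this is classical (it is the Bochner characterization applied to the action of $\ZZ$ generated by $V$, cf. the Jacobs--de Leeuw--Glicksberg circle of ideas already cited in the excerpt), but it does need to be invoked explicitly. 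Once that identification is in place the rest is a direct concatenation of Theorem~\ref{thm:basit}, Lemma~\ref{lem:apchar}, and Lemma~\ref{lem:aapchar}, with no further estimates required.
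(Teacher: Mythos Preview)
Your proposal is correct and follows essentially the same route as the paper: define $F(n):=T(na)x$, verify that $F(1+\cdot)-F(\cdot)$ is almost periodic in $\ell^\infty(\ZZ;E)$ via $E_{\aap}=E_{\ap}$ (Lemma~\ref{lem:apchar}), apply Basit's Theorem~\ref{thm:basit} to get $F$ almost periodic, and then use Lemma~\ref{lem:aapchar} to pass from the discrete orbit back to the full semigroup orbit. The paper's proof is terser and simply asserts that $F$ ``satisfies the assumptions of Theorem~\ref{thm:basit}''; your explicit discussion of the bridge between relative compactness of $\{T(a)^ny:n\in\ZZ\}$ and almost periodicity of $n\mapsto T(a)^ny$ in $\ell^\infty(\ZZ;E)$ is a genuine clarification of a step the paper leaves implicit.
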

\begin{proof}
By Lemmas \ref{lem:aapchar} and \ref{lem:apchar} we have to prove that $O(a,x):=\{T(na)x:n\in \NN\}\subseteq E$
is a relatively compact set. The function $F$ defined by $F(n):=T(na)x$ for
$n\in \ZZ$ satisfies the assumptions of Theorem \ref{thm:basit}, so we obtain
that $F$ is almost periodic, hence its range $F(\ZZ)$ is relatively compact in $E$. 
\end{proof}

\begin{proof}[Proof of Theorem \ref{thm:nocodecomp}]
Suppose \begin{equation*}x\in \ker(T(t_1)-\Id)(T(t_2)-\Id)\cdots
(T(t_n)-\Id).\end{equation*}
We prove by induction that $x\in E_{\aap}$, then Lemma \ref{l:apdecomp} will finish the proof.
The case $n=1$ is trivial. Now let $y:=(T(t_1)-\Id)x$. Then $y$ satisfies a difference equation of length $n-1$ so by the inductive hypothesis $y$ is an asymptotically almost periodic vector. Proposition \ref{prop:lift} yields that $x$ itself belongs to $E_{\aap}$.
\end{proof}
We now return to the general Banach space setting, but restrict our attention to a special class of $C_0$-semigroups. Recall that  a $C_0$-semigroup $(T(t))_{t\geq 0}$ with exponential growth bound $\omega_0=\omega_0(T)$ is called \emph{norm continuous at infinity}, if
\begin{equation*}
\lim_{t\to\infty}\limsup_{h\downarrow 0}\bigl\|\ee^{-\omega_0t}T(t)\bigl(\Id-\ee^{-\omega_0 h}T(h)\bigr)\bigr\|=0.
\end{equation*}
We shall use two important facts:
\begin{theorem}[Spectral Mapping Theorem, \cite{mazon}]\label{thm:smt}
Let $(T(t))_{t\geq0}$ be an asymptotically norm continuous semigroup
with infinitesimal generator $A$ on  a Banach space $E$ and with $r(T(t))>0$. Then
 the relation
\begin{equation*}
\sigma(T(t))\cap \bigl\{\lambda\in \CC: |\lambda|=r(T(t))\bigr\}=\ee^{t\sigma(A)}\cap \bigl\{\lambda\in \CC: |\lambda|=r(T(t))\bigr\}\quad\mbox{($t\geq 0$)}
\end{equation*}
holds for the spectra.
\end{theorem}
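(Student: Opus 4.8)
The plan is to reduce to a statement about the peripheral spectrum of a rescaled semigroup, and then combine the always-true spectral inclusion theorem with an approximate-eigenvector argument whose closing step is where norm continuity at infinity is really used. First I would rescale: replacing $T(t)$ by $\ee^{-\omega_0 t}T(t)$, whose generator is $A-\omega_0\Id$ and whose spectrum is correspondingly scaled, we may assume $\omega_0=0$. Then $r(T(t))=1$ for every $t>0$, so $\sigma(T(t))\subseteq\{z:|z|\le 1\}$, and the norm-continuity-at-infinity hypothesis becomes $\lim_{t\to\infty}\limsup_{h\downarrow 0}\|T(t)(\Id-T(h))\|=0$. Under this normalization the assertion is precisely that $\sigma(T(t))\cap\TT=\ee^{t\sigma(A)}\cap\TT$ for each $t>0$.

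The inclusion $\ee^{t\sigma(A)}\cap\TT\subseteq\sigma(T(t))$ is the spectral inclusion theorem, valid for every $C_0$-semigroup: from the identity $\ee^{t\mu}\Id-T(t)=(\mu\Id-A)\int_0^t \ee^{(t-s)\mu}T(s)\,ds=\int_0^t \ee^{(t-s)\mu}T(s)\,ds\,(\mu\Id-A)$ one reads off that $\ee^{t\mu}\Id-T(t)$ cannot be invertible when $\mu\Id-A$ is not. For the converse, fix $t_0>0$ and $\lambda=\ee^{i\beta t_0}\in\sigma(T(t_0))$ with $|\lambda|=1$; since $\sigma(T(t_0))\subseteq\{z:|z|\le1\}$, the point $\lambda$ lies on the topological boundary of $\sigma(T(t_0))$ and hence in its approximate point spectrum, so there are unit vectors $x_n$ with $\|T(t_0)x_n-\lambda x_n\|\to 0$. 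The only candidates for $\mu\in\sigma(A)$ with $\ee^{t_0\mu}=\lambda$ are the purely imaginary numbers $\mu_k:=i(\beta+2\pi k/t_0)$, $k\in\ZZ$. Setting $z_n^{(k)}:=\tfrac1{t_0}\int_0^{t_0}\ee^{-\mu_k s}T(s)x_n\,ds$, an integration by parts (using $\ee^{-\mu_k t_0}=\bar\lambda$) gives $(A-\mu_k)z_n^{(k)}=\tfrac{\bar\lambda}{t_0}\,(T(t_0)x_n-\lambda x_n)$, hence $\|(A-\mu_k)z_n^{(k)}\|\to 0$. Therefore, if $\liminf_n\|z_n^{(k)}\|>0$ for some $k$, then $\mu_k\in\sigma(A)$ and $\ee^{t_0\mu_k}=\lambda$, and we are done.

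The remaining — and this is the main obstacle — is to exclude the case $\|z_n^{(k)}\|\to 0$ for \emph{every} $k$. The observation that makes this tractable is that $z_n^{(k)}$ is exactly the $k$-th Fourier coefficient of the $E$-valued function $g_n(s):=\ee^{-i\beta s}T(s)x_n$ viewed as $t_0$-periodic, and that $g_n$ is asymptotically $t_0$-periodic: $g_n(s+t_0)-g_n(s)=\bar\lambda\,\ee^{-i\beta s}T(s)(T(t_0)x_n-\lambda x_n)$, which tends to $0$ uniformly on bounded $s$-intervals. Norm continuity at infinity now enters through the orbit's regularity: after replacing $x_n$ by $T(t_1)x_n$ for a fixed, large enough $t_1$ — legitimate because $1=\|x_n\|\approx\|T(mt_0)x_n\|\le\|T(mt_0-t_1)\|\,\|T(t_1)x_n\|$ keeps $\|T(t_1)x_n\|$ bounded below uniformly in $n$ — the commutation $T(s)(\Id-T(h))T(t_1)=T(s+t_1)(\Id-T(h))$ together with $\limsup_{h\downarrow 0}\|T(t)(\Id-T(h))\|<\veps$ for all $t\ge t_1$ shows that $\{g_n\}$ is \emph{uniformly} equicontinuous on $[0,\infty)$.

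Finally, a uniformly equicontinuous family of $E$-valued functions whose norms are bounded below at $s=0$ cannot have all of its Fourier coefficients tending to $0$: by Fej\'er summability the Fej\'er means — finite linear combinations of the $z_n^{(k)}$ — would converge to $g_n$ uniformly at a rate independent of $n$, forcing $g_n(0)\to 0$, contradicting $\|g_n(0)\|\ge c>0$. Hence $\liminf_n\|z_n^{(k)}\|>0$ for some $k$, and the reverse inclusion follows. I expect the genuinely delicate points to be this last Fej\'er-type argument in the Banach-space-valued asymptotically-periodic setting, and the bookkeeping needed to carry the asymptotically periodic error terms and the lower norm bound through the replacement $x_n\mapsto T(t_1)x_n$; the reduction and the spectral inclusion part are routine.
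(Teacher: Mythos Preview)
The paper does not give a proof of this statement: Theorem~\ref{thm:smt} is quoted from Martinez--Mazon \cite{mazon} as an external result and is only \emph{used} (together with Theorem~\ref{thm:persgrp}) in the proof of Lemma~\ref{normcontap}. There is therefore no in-paper argument to compare your proposal against.

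That said, your sketch is a sound outline of how the Martinez--Mazon theorem is actually proved. The rescaling to $\omega_0=0$ and the spectral inclusion direction are standard; for the hard inclusion your approximate-eigenvector/Fourier-coefficient construction $z_n^{(k)}=\tfrac{1}{t_0}\int_0^{t_0}\ee^{-\mu_k s}T(s)x_n\,ds$ with $(A-\mu_k)z_n^{(k)}=\tfrac{\bar\lambda}{t_0}(T(t_0)x_n-\lambda x_n)$ is exactly the right object, and the crucial use of norm continuity at infinity---passing to $T(t_1)x_n$ with $t_1$ large so that $\|T(s+t_1)(\Id-T(h))\|$ is uniformly small in $s\ge 0$, thereby making the orbit functions $g_n$ uniformly equicontinuous---is precisely where the hypothesis enters in \cite{mazon}. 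Your Fej\'er-summability closing step is a clean way to phrase the final contradiction (all Fourier coefficients $\to 0$ plus uniform equicontinuity forces $g_n(0)\to 0$, contradicting the lower norm bound). The two points you yourself flag---carrying the lower bound $\|T(t_1)x_n\|\ge c>0$ through the replacement, and the boundary mismatch $g_n(t_0)-g_n(0)\to 0$ in the Fej\'er argument---are genuine but routine; one minor caution is that $\omega_0=0$ alone does not give a uniform bound on $\|T(t)\|$, so the constant in the lower-bound estimate $\|T(t_1)x_n\|\ge \|T(mt_0)x_n\|/\|T(mt_0-t_1)\|$ depends on the fixed choice of $m$ and $t_1$, which is harmless since these are fixed before sending $n\to\infty$.
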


\begin{theorem}[see {\cite[Thm.{} IV.2.26]{EN00}}]\label{thm:persgrp}
If $(T(t))_{t\geq0}$ is a strongly continuous $\alpha$-periodic semigroup (i.e., $T(\alpha)=\Id$ so
actually a group) with infinitesimal generator $A$, then $\sigma(A)\subset 2\pi
i\alpha\ZZ$ and the spectrum consist of eigenvalues only. i.e., $P\sigma(A)=\sigma(A)$.
\end{theorem}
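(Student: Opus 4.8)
The plan is to extract the ``Fourier coefficients'' of the periodic representation $t\mapsto T(t)$ and read the spectrum of $A$ off from them. First, the inclusion $\sigma(A)\subseteq\frac{2\pi i}{\alpha}\ZZ$ is immediate from the spectral inclusion theorem $\ee^{t\sigma(A)}\subseteq\sigma(T(t))$, which holds for every $C_0$-semigroup: taking $t=\alpha$ gives $\ee^{\alpha\sigma(A)}\subseteq\sigma(T(\alpha))=\sigma(\Id)=\{1\}$, so $\ee^{\alpha\lambda}=1$ for every $\lambda\in\sigma(A)$. In particular each point of $\sigma(A)$ is isolated, so for the point-spectrum assertion it suffices, after fixing $\lambda_0:=\frac{2\pi i k}{\alpha}$ with $k\in\ZZ$, to show that $\lambda_0\in\sigma(A)$ forces $\lambda_0$ to be an eigenvalue.

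To this end I would introduce the bounded operator
\[
P_k:=\frac1\alpha\int_0^\alpha\ee^{-\lambda_0 s}T(s)\,ds,
\]
the integral taken strongly. Since $T(u+\alpha)=T(u)T(\alpha)=T(u)$ and $\ee^{-\lambda_0(u+\alpha)}=\ee^{-\lambda_0 u}$, the integrand $u\mapsto\ee^{-\lambda_0 u}T(u)$ is $\alpha$-periodic, and the substitution $u=t+s$ then yields $T(t)P_k=\ee^{\lambda_0 t}P_k$ for all $t\ge 0$. As $t\mapsto T(t)P_kx=\ee^{\lambda_0 t}P_kx$ is differentiable, it follows that $\ran P_k\subseteq D(A)$ and $AP_k=\lambda_0 P_k$, i.e.\ $\ran P_k\subseteq\ker(\lambda_0\Id-A)$. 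Conversely, if $Ax=\lambda_0 x$ then $T(s)x=\ee^{\lambda_0 s}x$, and plugging this into the definition of $P_k$ gives $P_kx=x$. Hence $\ran P_k=\ker(\lambda_0\Id-A)$, and so $\lambda_0$ is an eigenvalue of $A$ exactly when $P_k\neq 0$.

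It remains to show $\lambda_0\in\sigma(A)\Rightarrow P_k\neq 0$, and here I would use the explicit resolvent of $A$. Because $T(\alpha)=\Id$, the group is bounded with growth bound $0$, so $R(\lambda,A)x=\int_0^\infty\ee^{-\lambda t}T(t)x\,dt$ for $\mathrm{Re}\,\lambda>0$; splitting this integral over the intervals $[m\alpha,(m+1)\alpha]$ and summing the resulting geometric series gives
\[
R(\lambda,A)=\frac1{1-\ee^{-\lambda\alpha}}\int_0^\alpha\ee^{-\lambda s}T(s)\,ds ,
\]
valid first for $\mathrm{Re}\,\lambda>0$ and then, since both sides are holomorphic and $\CC\setminus\frac{2\pi i}{\alpha}\ZZ\subseteq\rho(A)$ by the first paragraph, on all of $\CC\setminus\frac{2\pi i}{\alpha}\ZZ$ (one may also verify the formula directly, via the fundamental theorem of calculus for $C_0$-semigroups together with $T(\alpha)=\Id$, by checking that the right-hand side is a two-sided inverse of $\lambda\Id-A$). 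Near $\lambda_0$ the factor $1-\ee^{-\lambda\alpha}$ has a simple zero with derivative $\alpha\ee^{-\lambda_0\alpha}=\alpha$, so $R(\,\cdot\,,A)$ has at worst a simple pole at $\lambda_0$ whose residue is the Riesz spectral projection $Q$ belonging to the isolated spectral point $\lambda_0$; evaluating it,
\[
Q=\lim_{\lambda\to\lambda_0}(\lambda-\lambda_0)R(\lambda,A)=\frac1\alpha\int_0^\alpha\ee^{-\lambda_0 s}T(s)\,ds=P_k .
\]
If $\lambda_0\in\sigma(A)$ then its spectral subspace $\ran Q$ is nontrivial, so $P_k=Q\neq 0$, whence $\ker(\lambda_0\Id-A)=\ran P_k\neq\{0\}$. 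Thus $\sigma(A)\subseteq\frac{2\pi i}{\alpha}\ZZ$ and $\sigma(A)=P\sigma(A)$.

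I expect the last step to be the delicate one: one has to argue cleanly that the displayed quotient really represents the resolvent on the whole slit plane (not just on a half-plane) and that the residue of the resolvent at a simple pole coincides with the associated Riesz projection. A route that bypasses the resolvent formula is to use instead that, $\lambda_0$ being isolated in $\sigma(A)$, the operator $A|_{\ran Q}$ is bounded with spectrum $\{\lambda_0\}$, so $B:=(A-\lambda_0\Id)|_{\ran Q}$ is bounded and quasinilpotent and satisfies $\ee^{\alpha B}=T(\alpha)|_{\ran Q}=\Id$; applying a local holomorphic inverse of $z\mapsto\ee^{\alpha z}-1$ (which exists since this map has a simple zero at $0$) forces $B=0$, hence $\ran Q\subseteq\ker(\lambda_0\Id-A)$, and $Q\neq 0$ again gives that $\lambda_0$ is an eigenvalue.
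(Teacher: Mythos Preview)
The paper does not give its own proof of this theorem: it is quoted verbatim from \cite[Thm.~IV.2.26]{EN00} and used as a black box in the proof of Lemma~\ref{normcontap}. So there is nothing to compare against on the paper's side.

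Your argument is correct and is essentially the standard proof one finds in the literature. The spectral inclusion $\ee^{\alpha\sigma(A)}\subseteq\sigma(T(\alpha))=\{1\}$ gives $\sigma(A)\subseteq\frac{2\pi i}{\alpha}\ZZ$ (note, by the way, that the paper's displayed inclusion ``$\sigma(A)\subset 2\pi i\alpha\ZZ$'' is a typo for $\frac{2\pi i}{\alpha}\ZZ$, which you have silently corrected). The identification of the Fourier projection $P_k$ with the residue of the resolvent at $\lambda_0$ via the explicit formula
\[
R(\lambda,A)=\frac{1}{1-\ee^{-\lambda\alpha}}\int_0^\alpha\ee^{-\lambda s}T(s)\,ds
\]
is clean; the one point you flag yourself---that this quotient really is the resolvent on all of $\CC\setminus\frac{2\pi i}{\alpha}\ZZ$---is handled by your parenthetical remark (direct verification that the right-hand side inverts $\lambda\Id-A$, using $T(\alpha)=\Id$), which is the safest route and avoids any analytic-continuation subtleties. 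The conclusion ``$\lambda_0\in\sigma(A)\Rightarrow Q\neq 0$'' can be phrased even more directly: if $P_k=0$ the displayed formula shows $R(\,\cdot\,,A)$ has a removable singularity at $\lambda_0$, hence extends holomorphically there, forcing $\lambda_0\in\rho(A)$.

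Your alternative last paragraph (quasinilpotent $B$ with $\ee^{\alpha B}=\Id$ forces $B=0$) is also correct and is arguably the slicker way to finish, since it sidesteps the resolvent formula entirely.
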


Next lemma is an obvious modification of one in \cite{laczkovich/revesz:1989}, and was later proved also in \cite{KSave} in slightly more general situation. We recall the proof, because the result is interesting in itself.
\begin{lemma}\label{lem:reduce}
Let $T$ be a power-bounded operator on $E$. Then for $n\in \NN\setminus\{0\}$  the equality
\begin{equation*}
\ker(T-\Id)^n=\ker(T-\Id)
\end{equation*}
holds true.
\end{lemma}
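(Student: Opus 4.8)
The plan is to show the nontrivial inclusion $\ker(T-\Id)^n\subseteq\ker(T-\Id)$ by induction on $n$, reducing each step to the case $n=2$. So it suffices to prove: if $(T-\Id)^2x=0$, then $(T-\Id)x=0$. Set $y:=(T-\Id)x$, so $Ty=y$, i.e.\ $y$ is a fixed vector of $T$. The key computation is the binomial expansion of $T^k$ acting on $x$: since $T=\Id+(T-\Id)$ and $(T-\Id)^2x=0$, we get $T^kx=x+k(T-\Id)x=x+ky$ for every $k\in\NN$. Thus $\|x+ky\|=\|T^kx\|\leq M\|x\|$ for all $k$, where $M$ bounds the powers of $T$.

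The heart of the argument is then elementary: a bounded sequence of the form $(x+ky)_{k\in\NN}$ forces $y=0$. Indeed, if $y\neq0$, then $\|x+ky\|\geq k\|y\|-\|x\|\to\infty$, contradicting $\|x+ky\|\leq M\|x\|$. Hence $y=(T-\Id)x=0$, which gives $\ker(T-\Id)^2=\ker(T-\Id)$. For general $n$, if $(T-\Id)^nx=0$ with $n\geq 2$, apply the case just proved to the vector $(T-\Id)^{n-2}x$ to conclude $(T-\Id)^{n-1}x=0$, and iterate (or phrase this as a clean downward induction) until reaching $(T-\Id)x=0$.

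I do not anticipate a genuine obstacle here — the only thing to be slightly careful about is that power-boundedness is used in the form $\sup_k\|T^k\|=M<\infty$ and that the binomial identity $T^kx=\sum_{j}\binom kj(T-\Id)^jx$ truncates after the $j=1$ term precisely because $(T-\Id)^2x=0$; both are immediate. One could alternatively invoke a mean-ergodic or Cesàro-averaging argument, but the direct norm estimate above is shorter and self-contained, so that is the route I would take.
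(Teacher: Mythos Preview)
Your proof is correct and essentially the same as the paper's: both reduce to showing $\ker(T-\Id)^2=\ker(T-\Id)$ by applying the argument to $y=(T-\Id)^{n-2}x$, and both observe that $T^ky=y+kz$ with $z=(T-\Id)y$ fixed, which contradicts power-boundedness unless $z=0$. The only cosmetic difference is that you obtain $T^kx=x+ky$ via the truncated binomial expansion while the paper uses the telescoping identity $T^Ny-y=\sum_{i=0}^{N-1}T^i(T-\Id)y$.
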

\begin{proof}
Take $x\in \ker(T-\Id)^n$. We have to prove $x\in \ker(T-\Id)^{n-1}$. Let $n\geq 2$ and set $y=(T-\Id)^{n-2}x$, $z=(T-\Id)y$, so $Tz=z$.
 For any $N\in \NN$ we have
\begin{align*}
T^Ny&=\sum_{i=0}^{N-1} T^i(T-\Id)y+y=Ny+z.
\end{align*}
If $z\neq 0$, this yields a contradiction with the power-boundedness of $T$.%
\end{proof}

Recall that $t,s\in \RR$ are called \emph{incommensurable} if $s\RR\cap t\RR=\{0\}$; otherwise they are called \emph{commensurable}.

\begin{lemma}\label{normcontap}
Let $(T(t))_{t\geq 0}$ be a bounded strongly continuous semigroup that is
asymptotically norm continuous. Then for  every $t_1,t_2,\dots, t_n> 0$ 
\begin{equation*}
\ker(T(t_1)-\Id)(T(t_2)-\Id)\cdots
(T(t_n)-\Id)\subseteq E_{\aap}.
\end{equation*}
\end{lemma}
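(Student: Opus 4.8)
The plan is to fix a vector $x\in\ker(T(t_1)-\Id)\cdots(T(t_n)-\Id)$ and show that its forward orbit $\{T(t)x:t\geq0\}$ is relatively compact, i.e.\ that $x\in E_{\aap}$. By Remark \ref{rem:omega0} we may assume $\omega_0(T)=0$, so that $r(T(s))=1$ for every $s>0$. I would then pass to the cyclic subspace
\begin{equation*}
F:=\overline{\mathrm{span}}\,\{T(s)x:s\geq0\}.
\end{equation*}
It is closed and $T(t)$-invariant for all $t\geq0$, the restriction $(T(t)|_F)_{t\geq0}$ is again a bounded, asymptotically norm continuous $C_0$-semigroup, and --- this is the only use of the difference equation --- since the operator $(T(t_1)-\Id)\cdots(T(t_n)-\Id)$ commutes with every $T(s)$ and annihilates $x$, it annihilates the whole orbit of $x$, hence $F$; thus $(T(t_1)|_F-\Id)\cdots(T(t_n)|_F-\Id)=0$ as an operator on $F$. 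It therefore suffices to show $F=F_{\aap}$, because then in particular the orbit of $x\in F$ is relatively compact in $F$, hence in $E$.

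The next step is to localize $\sigma(A_F)\cap i\RR$, where $A_F$ generates $(T(t)|_F)_{t\geq0}$. Boundedness gives $\sigma(A_F)\subseteq\{\operatorname{Re}\lambda\leq0\}$, so $\sigma(A_F)\cap i\RR$ lies in the boundary of $\sigma(A_F)$, hence in the approximate point spectrum. If $i\eta\in\sigma(A_F)\cap i\RR$, pick unit vectors $x_k\in D(A_F)$ with $(A_F-i\eta)x_k\to0$; the identity $T(s)x_k-\ee^{i\eta s}x_k=\int_0^sT(s-r)|_F\,\ee^{i\eta r}(A_F-i\eta)x_k\,dr$ gives $(T(s)|_F-\ee^{i\eta s})x_k\to0$ for every $s>0$. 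Expanding the product $\prod_{j}(T(t_j)|_F-\Id)$ via $T(t_j)|_F-\Id=(T(t_j)|_F-\ee^{i\eta t_j})+(\ee^{i\eta t_j}-1)$ and using commutativity, every term other than $\prod_j(\ee^{i\eta t_j}-1)\Id$ carries a factor that kills $x_k$ in the limit; since the product itself is $0$, we get $\prod_j(\ee^{i\eta t_j}-1)=0$, hence $\ee^{i\eta t_j}=1$ for some $j$. Therefore
\begin{equation*}
\sigma(A_F)\cap i\RR\ \subseteq\ i\bigcup_{j=1}^n\tfrac{2\pi}{t_j}\ZZ,
\end{equation*}
which is a locally finite, hence closed and countable, subset of $i\RR$; by the Spectral Mapping Theorem \ref{thm:smt} the same countability then holds for $\sigma(T(s)|_F)\cap\TT$, $s>0$.

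It remains to deduce $F=F_{\aap}$, and this is where asymptotic norm continuity is essential --- it is the main obstacle. The goal is a splitting $F=F_{\mathrm u}\oplus F_0$, where $F_{\mathrm u}$ is the closed span of the eigenspaces $\ker(A_F-i\eta)$, $i\eta\in\sigma(A_F)\cap i\RR$, and $F_0=\{y\in F:T(t)y\to0\}$. On each such eigenspace one has $T(t)|_F=\ee^{i\eta t}\Id$: indeed $\ee^{-i\eta t}T(t)$ restricted there is a bounded $C_0$-group whose generator is bounded with spectrum $\{0\}$, and the resolvent having at most polynomial growth forces a pole, the pole order forcing nilpotency, and boundedness of the group forcing that nilpotent part to vanish. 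Consequently every vector of $F_{\mathrm u}$ has relatively compact orbit, and so does every vector of $F_0$ (its orbit is eventually small), so both summands lie in $F_{\aap}$ and $F=F_{\aap}$ follows, giving $x\in E_{\aap}$. The hard part is proving that this splitting actually holds --- completeness of the rotation eigenvectors together with the stable part. One expects this to follow from the countability of $\sigma(A_F)\cap i\RR$ together with asymptotic norm continuity (in the spirit of the Jacobs--de Leeuw--Glicksberg and Arendt--Batty--Lyubich--V\~u circle of results, now in continuous time), using that asymptotic norm continuity makes every imaginary spectral point an isolated pole and yields a spectral-bound-equals-growth-bound dichotomy on the complementary part. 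Finally I would note that when $t_1,\dots,t_n$ are pairwise commensurable this last step is elementary: writing $t_j=k_j\tau$ one has $\prod_j\bigl((T(\tau)|_F)^{k_j}-\Id\bigr)=0$, so $S:=T(\tau)|_F$ is power-bounded with $\sigma(S)$ contained in the roots of unity; the Riesz decomposition for the finitely many peripheral eigenvalues together with Lemma \ref{lem:reduce} (applied to the operators $\ee^{-i\theta}S$) shows $S=\zeta_\ell\Id$ on the $\ell$-th summand, so every orbit $\{S^mu:m\in\NN\}$ is finite and $F=F_{\aap}$ by Lemma \ref{lem:aapchar}.
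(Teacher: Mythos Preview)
Your approach differs substantially from the paper's, and the step you yourself flag as ``the hard part'' is a genuine gap, not a detail. You reduce to showing $F=F_{\aap}$ on the cyclic subspace, and for this you want a splitting $F=F_{\mathrm u}\oplus F_0$ into a rotation part and a strongly stable part. But countability of $\sigma(A_F)\cap i\RR$ together with Theorem~\ref{thm:smt} does \emph{not} by itself yield such a splitting: you would need that each imaginary spectral value is isolated in the \emph{full} spectrum $\sigma(A_F)$ (not just in $\sigma(A_F)\cap i\RR$), that the Riesz projections exist and have finite-order poles, and that the restriction to the complementary range is strongly stable. None of this is established; ``one expects this to follow from \ldots'' is not a proof. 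The ABLV-type results you invoke require a hypothesis on the dual point spectrum that you have not checked, and your assertion that ``asymptotic norm continuity makes every imaginary spectral point an isolated pole'' is not a consequence of the tools quoted in the paper. Your commensurable-case sketch is fine, but the general case remains open in your write-up.

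The paper bypasses this difficulty by a constructive induction on $n$ that never needs a global Jacobs--de Leeuw--Glicksberg or ABLV-type splitting. After reducing via Lemma~\ref{lem:reduce} to pairwise incommensurable $t_1,\dots,t_n$, one sets $y:=(T(t_1)-\Id)x$; by the inductive hypothesis $y\in E_{\aap}$, so Lemma~\ref{l:apdecomp} gives $y=y_2+\cdots+y_n$ with $y_j\in\ker(T(t_j)-\Id)$. The crucial observation is that the restricted semigroup on $\ker(T(t_j)-\Id)$ is $t_j$-\emph{periodic}; Theorem~\ref{thm:persgrp} forces its generator spectrum into $\tfrac{2\pi i}{t_j}\ZZ$, and then incommensurability of $t_1$ with $t_j$ plus Theorem~\ref{thm:smt} gives $1\notin\sigma\bigl(T(t_1)|_{\ker(T(t_j)-\Id)}\bigr)$. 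Hence $T(t_1)-\Id$ is invertible there, producing $x_j\in\ker(T(t_j)-\Id)$ with $(T(t_1)-\Id)x_j=y_j$; then $x-(x_2+\cdots+x_n)\in\ker(T(t_1)-\Id)$ and $x$ is a finite sum of periodic vectors, so $x\in E_{\aap}$. The only spectral analysis takes place on the periodic subspaces, where the structure is completely explicit.
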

\begin{proof}
We may suppose $\omega_0(T)=0$ otherwise, the statement is trivial in view of Remark \ref{rem:omega0}.
We argue by induction on $n$. The case $n=1$ being trivial, let $n\in\NN$, $n\geq2 $. Take $x\in \ker(T(t_1)-\Id)(T(t_2)-\Id)\cdots(T(t_n)-\Id)$. We can assume that $t_1,t_2,\dots, t_n$ are
pairwise incommensurable. Indeed, if, say, $t_1$ and $t_2$ are
commensurable, then $m_1t_1=m_2t_2=s$ for some $m_1,m_2\in\ZZ\setminus\{0\}$, hence 
\begin{equation*}
T(s)-\Id=T(m_kt_k)-\Id=T(t_k)^{m_k}-\Id=(T(t_k)-\Id)\sum_{i=0}^{m_k-1}T(t_k)^{i}\quad \mbox{for $k=1,2$}.
\end{equation*}
So that
\begin{align*}
x\in& \ker\bigl(T(m_1t_1)-\Id)(T(m_2t_2)-\Id)(T(t_3)-\Id)\cdots(T(t_n)-\Id)\bigr)\\
&\quad=\ker\bigl((T(s)-\Id)^2(T(m_3t_3)-\Id)\cdots(T(t_n)-\Id)\bigr).
\end{align*}
Hence by Lemma \ref{lem:reduce} \begin{equation*}
x\in \ker(T(s)-\Id)(T(m_3t_3)-\Id)\cdots(T(t_n)-\Id),
\end{equation*}
and the induction hypothesis yields $x\in E_\aap$.

\medskip\noindent So we can assume that $t_1,t_2,\dots, t_n$ are all pairwise
incommensurable, and without loss of generality that $t_1=1$ and let $y:=(T(1)-\Id)x$. Then by the
inductive assumption $y$ is asymptotically almost periodic. By Lemma \ref{l:apdecomp} we can
decompose $y$ as
\begin{equation*}
y=y_2+y_3+\cdots +y_n,
\end{equation*}
with $y_j\in \ker(T(t_j)-\Id)$ for $j=2,3,\dots,n$. We restrict
the semigroup $(T(t))_{t\geq 0}$ to each of the spaces
$\ker(T(t_j)-\Id)$ on which it becomes a $t_j$-periodic $C_0$-group, denoted by $(S_j(t))_{t\in \RR}$.  Evidently $\omega_0(S_j)\leq \omega_0(T)=0$. If for some $j\in \{2,\dots, n\}$ strict inequality  $\omega_0(S_j)< \omega_0(T)=0$ holds, then $S_j(1)-\Id$ is invertible. Consider only $j\in \{2,\dots, n\}$ with $\omega_0(S_j)=0$. By Theorem \ref{thm:persgrp} we know that for the generator $A_j$ of $(S_j(t))_{t\geq 0}$
one has $\sigma(A_j)=P\sigma(A_j)\subset \frac{2\pi i}{t_j}\ZZ$. Also we
have the spectral mapping theorem in the following from
\begin{equation*}
\sigma(S_j(t))\cap \bigl\{z:|z|=1\bigr\}=\ee^{t\sigma(A_j)}\cap\bigl\{z:|z|=1\bigr\},
\end{equation*}
see Theorem \ref{thm:smt} with $r(S_j(t))=1$.  These and the incommensurability of $1$ and $t_j$ imply that
$1\not\in\sigma(S_j(1))$. Indeed, if $1\in\sigma(S_j(1))$ holds, then for some $k\in \ZZ$ we have $2\pi i k\in \sigma(A_j)\subseteq 2\pi i/t_j\ZZ$, contradicting the incommensurability.  So again $(S_j(1)-\Id)$ is invertible. Finally, we obtain for every $j\in \{1,\dots,n\}$ that  there is $y_j\in \ker(T(t_j)-\Id)$ with $(T(1)-\Id)x_j=y_j$.
Now $x_1:=x-(x_2+\cdots+x_n)$ belongs to $\ker(T(1)-\Id)$ by construction. This shows
that $x$ is an almost periodic vector.
\end{proof}

\begin{theorem}
Let $(T(t))_{t\geq 0}$ be a bounded$C_0$-semigroup that is
norm continuous at infinity. Then for all $n\in\NN$ and $t_1,\dots,t_n\geq0$ we have
\begin{equation*}
\ker(T(t_1)-\Id)\cdots (T(t_n)-\Id)=\ker(T(t_1)-\Id)+\cdots+\ker(T(t_n)-\Id).
\end{equation*}
\end{theorem}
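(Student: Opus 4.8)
The plan is to reduce this final theorem directly to Lemma \ref{normcontap} together with Corollary \ref{cor:mearg}. The crucial observation is that ``norm continuous at infinity'' is precisely the hypothesis needed to invoke the Spectral Mapping Theorem (Theorem \ref{thm:smt}) via the asymptotic norm continuity used in Lemma \ref{normcontap}; indeed norm continuity at infinity is one of the standard equivalent formulations of asymptotic norm continuity. So the first step is to record that a bounded $C_0$-semigroup that is norm continuous at infinity is asymptotically norm continuous, hence Lemma \ref{normcontap} applies verbatim.

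Next, as in Remark \ref{rem:omega0}, I would dispose of the degenerate cases: if $\omega_0(T)<0$ then all the relevant kernels are $\{0\}$ and there is nothing to prove, so we may assume $\omega_0(T)=0$ (boundedness forces $\omega_0(T)\le 0$). Also, if some $t_j=0$ then $T(t_j)-\Id=0$ and both sides of the claimed identity trivially collapse to all of $E$, so we may assume $t_1,\dots,t_n>0$. Under these reductions, Lemma \ref{normcontap} gives
\begin{equation*}
\ker(T(t_1)-\Id)\cdots(T(t_n)-\Id)\subseteq E_{\aap}.
\end{equation*}

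The remaining step is to intersect with $E_{\aap}$ and apply Lemma \ref{l:apdecomp}. Since the left-hand side is contained in $E_{\aap}$, we have
\begin{equation*}
\ker(T(t_1)-\Id)\cdots(T(t_n)-\Id)=E_{\aap}\cap\ker(T(t_1)-\Id)\cdots(T(t_n)-\Id),
\end{equation*}
and Lemma \ref{l:apdecomp} identifies the right-hand side with $\ker(T(t_1)-\Id)+\cdots+\ker(T(t_n)-\Id)$. (One should note that each $\ker(T(t_j)-\Id)\subseteq E_{\aap}$ by strong continuity, so the sum on the right is unambiguously a subspace of $E_{\aap}$ and the reverse inclusion ``$\supseteq$'' is the elementary one valid for any commuting operators.) This completes the proof.

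I do not expect a genuine obstacle here: the theorem is essentially a corollary packaging Lemma \ref{normcontap}, Lemma \ref{l:apdecomp}, and Remark \ref{rem:omega0}. The only point requiring a little care — and the one I would state explicitly rather than leave to the reader — is the equivalence between the ``norm continuous at infinity'' condition as written in the excerpt and the ``asymptotically norm continuous'' hypothesis of Lemma \ref{normcontap} and Theorem \ref{thm:smt}; once that is granted, everything else is immediate.
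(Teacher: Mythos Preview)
Your proposal is correct and follows exactly the paper's approach: the paper's proof consists of the single sentence that the assertion follows from Lemmas \ref{l:apdecomp} and \ref{normcontap}, and you have simply spelled out the trivial reductions and the way these two lemmas combine. Note that in this paper ``norm continuous at infinity'' and ``asymptotically norm continuous'' are used synonymously (compare the definition with the hypothesis of Theorem \ref{thm:smt}), so the equivalence you flag is a non-issue here.
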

\begin{proof}
The assertion follows from Lemmas \ref{l:apdecomp} and \ref{normcontap}.
\end{proof}

\parindent0pt
\def\cprime{$'$} \def\cprime{$'$}
\providecommand{\bysame}{\leavevmode\hbox to3em{\hrulefill}\thinspace}
\providecommand{\MR}{\relax\ifhmode\unskip\space\fi MR }
\providecommand{\MRhref}[2]{%
  \href{http://www.ams.org/mathscinet-getitem?mr=#1}{#2}
}
\providecommand{\href}[2]{#2}

\end{document}